\newtheorem{theorem}{Theorem}[section] 
\newtheorem{theorem*}{Theorem} 
\newtheorem{corollary}[theorem]{Corollary}
\newtheorem{lemma}[theorem]{Lemma}
\newtheorem{conjecture}[theorem]{Conjecture}
\theoremstyle{definition}
\newtheorem{observation}[theorem]{Observation}
\theoremstyle{remark}
\newcommand{\CL}[1]{\left\lceil #1 \right\rceil}
\newcommand{\FL}[1]{\left\lfloor #1 \right\rfloor}
\def\gjoin{\copy\dplus}
\def\esub{\subseteq}
\def\nul{\varnothing}
\def\Gb{\overline{G}}
\def\Kb{\overline{K}}
\def\VEC#1#2#3{#1_{#2},\ldots,#1_{#3}}
\def\th{\hat t}
\title{Uniquely cycle-saturated graphs}
\author{Paul S.\ Wenger\footnotemark[1]\ \ and Douglas B.\ West\footnotemark[2]}
\begin{document}

\maketitle

\begin{abstract}
Given a graph $F$, a graph $G$ is {\it uniquely $F$-saturated} if $F$ is not a
subgraph of $G$ and adding any edge of the complement to $G$ completes
exactly one copy of $F$.  In this paper we study uniquely $C_t$-saturated
graphs.  We prove the following:
(1) a graph is uniquely $C_5$-saturated if and only if it is a friendship
graph.
(2) There are no uniquely $C_6$-saturated graphs or uniquely $C_7$-saturated
graphs.
(3) For $t\ge6$, there are only finitely many uniquely $C_t$-saturated graphs
(we conjecture that in fact there are none).

{\bf Keywords: 05C35; saturation; unique saturation} 
\end{abstract}

\renewcommand{\thefootnote}{\fnsymbol{footnote}}
\footnotetext[1]{
School of Mathematical Sciences, Rochester Institute of Technology, Rochester, NY;
{\tt pswsma@rit.edu}.}
\footnotetext[2]{
Departments of Mathematics, Zhejiang Normal University, China, and University of Illinois, USA ,
{\tt west@math.uiuc.edu}. Research supported by Recruitment Program of Foreign Experts, 1000 Talent Plan,
State Administration of Foreign Experts Affairs, China}
\renewcommand{\thefootnote}{\arabic{footnote}}

\baselineskip18pt

\section{Introduction}
Given a graph $F$, a graph $G$ is {\it $F$-saturated} if $F$ is not a subgraph
of $G$ but is a subgraph of $G+e$ for every edge $e$ in the complement $\Gb$ of
$G$.  In 1907, Mantel~\cite{Mantel} proved that the $n$-vertex $K_3$-saturated
graph with the most edges is $K_{\CL{n/2},\FL{n/2}}$.  Tur\' an~\cite{Turan}
generalized this result, proving that the $n$-vertex $K_t$-saturated graph with
the most edges is the complete $(t-1)$-partite graph with partite sets as
balanced as possible.  Erd\H os, Hajnal, and Moon~\cite{EHM} proved that the
$n$-vertex $K_t$-saturated graph with the fewest edges is
$K_{t-2}\gjoin \Kb_{n-t+2}$, where the {\it join} $G\gjoin H$ of graphs $G$ and
$H$ consists of the disjoint union of $G$ and $H$ plus edges connecting all
vertices of $G$ to all vertices of $H$.

There is an important distinction between $K_t$-saturated graphs with the most
and the fewest edges.  When an edge is added to a largest $n$-vertex
$K_t$-saturated graph, roughly $\left(\frac{n}{t-1}\right)^{t-2}$ copies of
$K_t$ are formed.  In contrast, when an edge is added to a smallest $n$-vertex
$K_t$-saturated graph, exactly one copy of $K_t$ is formed.  Given a graph $F$
and an $F$-saturated graph $G$, we say that $G$ is {\it uniquely $F$-saturated}
if the addition of any edge to $G$ completes exactly one copy of $F$.

Questions about uniquely $F$-saturated graphs focus on their existence.
Cooper, LeSaulnier, Lenz, Wenger, and West~\cite{CLLWW}  initiated the study
of uniquely $F$-saturated graphs by determining all uniquely $C_4$-saturated
graphs, where $C_t$ denotes the $t$-vertex cycle; there are exactly 10 such
graphs.  They also observed that a graph is uniquely $C_3$-saturated if and
only if it is a star or a Moore graph of diameter $2$.

Stars have a dominating vertex, but Moore graphs of diameter $2$ do not.
If $G$ is uniquely $K_{t}$-saturated, then $K_m\gjoin\,G$ is uniquely
$K_{m+t}$-saturated and has dominating vertices.  Cooper~\cite{Cooper}
conjectured that for $t\ge2$, only finitely many $K_t$-saturated graphs
have no dominating vertices.  Hartke and Stolee~\cite{HStol} computationally found
new examples for small $t$ of $K_t$-saturated graphs without dominating vertices
and found two constructions of $K_t$-saturated graphs without dominating vertices
based on Cayley graphs, each valid for infinitely many $t$.

Berman, Chappell, Faudree, Gimble, and Hartman~\cite{BCFGH} studied uniquely
tree-saturated graphs.  They proved if $T$ is a tree, then there exist
infinitely many uniquely $T$-saturated graphs if and only if $T$ is a balanced
double star.

When $F$ has $t$ vertices, every complete graph with fewer than $t$ vertices
trivially is uniquely $F$-saturated, since there are no edges to consider
adding.  Let a uniquely $C_t$-saturated graph be {\it nontrivial} if it has
at least $t$ vertices.
In Section~\ref{Sec:Lemmas} we establish structural lemmas about such graphs.
In Section~\ref{Sec:C_5} we prove that the nontrivial uniquely $C_5$-saturated
graphs are precisely the graphs consisting of edge-disjoint triangles with
one common vertex (adjacent to all others).  Such graphs are also called
{\it friendship graphs}, because they are the graphs in which every two
vertices have exactly one common neighbor (proved initially by Erd\H{o}s,
R\'enyi, and S\'os~\cite{ERS} and later reproved by others).  In
Section~\ref{Sec:C_6C_7} we prove that there are no nontrivial uniquely
$C_6$-saturated graphs or uniquely $C_7$-saturated graphs.  Finally, in
Section~\ref{Sec:finite} we prove the following theorem.
\begin{theorem}\label{thm:finite}
For $t\ge 6$, there are finitely many uniquely $C_t$-saturated graphs.
\end{theorem}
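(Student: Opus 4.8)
The plan is to bound the number of vertices of any nontrivial uniquely $C_t$-saturated graph by a function of $t$; since the trivial examples are exactly the complete graphs $K_m$ with $m<t$, of which there are finitely many, this suffices. So let $G$ be a nontrivial uniquely $C_t$-saturated graph. If $u$ and $v$ are nonadjacent vertices of $G$, then $G+uv$ contains a copy of $C_t$, and since $G$ is $C_t$-free this copy uses the edge $uv$; hence $G$ contains a $u,v$-path of length $t-1$, so $d_G(u,v)\le t-1$. As $t\ge 6$, this already yields that $G$ is connected with $\operatorname{diam}(G)\le t-1$. Writing $\Delta$ for the maximum degree of $G$, a breadth-first search from any vertex then gives
\[
|V(G)|\ \le\ 1+\Delta+\Delta(\Delta-1)+\cdots+\Delta(\Delta-1)^{t-2},
\]
so it is enough to bound $\Delta$ in terms of $t$.

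Bounding the maximum degree is the technical heart of the argument. Let $v$ be a vertex of degree $\Delta$ and put $N=N(v)$. The first observation is that $G[N]$ contains no path on $t-1$ vertices: a path $x_1x_2\cdots x_{t-1}$ inside $N$ together with the edges $vx_1$ and $vx_{t-1}$ would be a copy of $C_t$ in $G$. By the Erd\H{o}s--Gallai theorem on paths, $G[N]$ then has at most $\frac{t-3}{2}\,|N|$ edges, so the number of nonadjacent pairs in $N$ is at least $\binom{|N|}{2}-\frac{t-3}{2}\,|N|$, which grows quadratically in $|N|=\Delta$. For each such nonadjacent pair $x,y$ the graph $G+xy$ has exactly one copy of $C_t$; equivalently $G$ contains a unique $x,y$-path $P_{xy}$ of length $t-1$. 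I would classify these pairs according to whether $P_{xy}$ passes through $v$. If $P_{xy}$ passes through $v$, then $v$ accounts for two of the $t-1$ edges and $P_{xy}$ decomposes into two paths of $G-v$, each of length at most $t-3$, joined at $v$; using the structural lemmas of Section~\ref{Sec:Lemmas} together with the fact that each $P_{xy}$ is the \emph{only} $x,y$-path of its length, one argues such paths can reach only boundedly many vertices of $N$, so only boundedly many pairs are of this type. If $P_{xy}$ avoids $v$, then the cycle $P_{xy}\cup\{xy\}$ lies in $G$ except for the single edge $xy$, and its two ends lie in $N$; since $v$ is adjacent to both ends, rerouting this cycle through $v$ (or through an alternative common neighbor of two of its vertices) would complete a second copy of $C_t$ for some added edge once enough such pairs are present, contradicting unique saturation. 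The two bounds together force $\Delta\le f(t)$ for an explicit function $f$.

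The main obstacle is precisely this degree bound. The diameter bound uses only that each added edge completes \emph{some} $C_t$, whereas the degree bound must genuinely exploit that it completes \emph{exactly one}: no bound on $\Delta$ can hold for $t=5$, since by Section~\ref{Sec:C_5} the friendship graphs are uniquely $C_5$-saturated and have arbitrarily large maximum degree, so any correct argument must break down exactly at $t=5$ — where $t-3=2$ and the "two paths of $G-v$'' above are forced to be single edges. Once $\Delta\le f(t)$ and $\operatorname{diam}(G)\le t-1$ are both in hand, the displayed inequality bounds $|V(G)|$, and there are only finitely many graphs on boundedly many vertices.
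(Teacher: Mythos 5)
The frame of your argument---diameter at most $t-1$ plus a bound on the maximum degree gives a bound on $|V(G)|$---matches the paper's strategy in spirit, and your Erd\H{o}s--Gallai observation that $G[N(v)]$ is $P_{t-1}$-free (hence has quadratically many nonadjacent pairs) is correct. But the entire difficulty of the theorem is concentrated in the degree bound, and there your proof has a genuine gap: both halves of your case analysis are asserted rather than proved. For pairs $x,y\in N(v)$ whose unique $t$-path passes through $v$, you claim that ``one argues such paths can reach only boundedly many vertices of $N$''; no such argument is given, and none is evident. The natural attempt fails: the two sub-paths meeting at $v$ create cycles through $v$ whose lengths sum to $t+1$, but the resulting copies of $H_{m,\ell}$ have pendant paths too short to contain the forbidden $H_{m,t-m-1}$ of Lemma~\ref{cycpath} except in degenerate cases, so neither a contradiction nor a counting bound falls out. (Your own sanity check illustrates the danger: in the friendship graphs for $t=5$, \emph{every} nonadjacent pair in $N(v)$ has its unique path through the center $v$, and there are unboundedly many such pairs; you would need to show concretely how $t\ge 6$ changes this, not merely observe that it must.) For pairs whose path avoids $v$, the statement that ``rerouting this cycle through $v$ \dots\ would complete a second copy of $C_t$ for some added edge once enough such pairs are present'' specifies neither the edge nor the two copies; the only thing that follows cheaply is that $v$ together with $P_{xy}$ forms a $(t+1)$-cycle, which is not forbidden.

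By contrast, the paper's degree bound takes a quite different route that you would have to reconstruct: it first reduces to $2$-connected graphs via a block-decomposition argument (Lemmas~\ref{notwo} and~\ref{finite1}), then anchors on a short even cycle $C$ of length at most $2t-6$ (Lemma~\ref{lem:noevencyc}), stratifies the remaining vertices by the lengths of their longest paths to $V(C)$, and bounds degrees by induction on that stratification, using the twins lemma to control neighbors whose neighborhoods lie in a bounded set and the automorphism-swapping Lemma~\ref{lem:isomorphcompnts} applied to four isomorphic components attached identically to a bounded cut set. None of these ingredients appears in your sketch, and without something playing their role the degree bound---and hence the theorem---is not established.
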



In light of our results, we make the following conjecture.

\begin{conjecture}~\label{conj:none}
For $t\ge 6$ there are no nontrivial uniquely $C_t$-saturated graphs.
\end{conjecture}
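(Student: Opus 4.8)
The plan is to recast unique $C_t$-saturation as a path-counting condition and then upgrade the finiteness of Theorem~\ref{thm:finite} to nonexistence by forcing enough regularity to trigger a spectral infeasibility argument. First I would record the basic dictionary. A copy of $C_t$ created by adding a non-edge $uv$ to $G$ corresponds exactly to a $u$--$v$ path on $t$ vertices in $G$. Hence a nontrivial $G$ is uniquely $C_t$-saturated if and only if $G$ contains no $C_t$ and every non-adjacent pair $\{u,v\}$ is joined by exactly one path on $t$ vertices. Since $G$ is $C_t$-free, every $t$-vertex path must have non-adjacent endpoints; otherwise that path together with the edge between its endpoints is a $C_t$ in $G$. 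Consequently the map sending each $t$-vertex path to its pair of endpoints is a bijection from the set of $t$-vertex paths of $G$ onto the set of non-edges of $G$: it lands in the non-edges by $C_t$-freeness, it is surjective by saturation, and it is injective because two $t$-vertex paths with a common endpoint pair $\{u,v\}$ would give two distinct $C_t$'s upon adding $uv$.

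Localizing this bijection at a vertex $v$ yields the identity
\begin{equation}\label{eq:plan-local}
p_v = n - 1 - \deg(v),
\end{equation}
where $n=|V(G)|$ and $p_v$ is the number of $t$-vertex paths having $v$ as an endpoint: each such path terminates at a non-neighbor of $v$, and each non-neighbor of $v$ is reached by exactly one. Next I would invoke Theorem~\ref{thm:finite} to fix the order, obtaining a bound $n\le f(t)$, so that it suffices to eliminate each admissible pair $(n,t)$. The crux of the second step is to prove that any nontrivial uniquely $C_t$-saturated graph with $t\ge 6$ is regular. The leverage is \eqref{eq:plan-local}: a low-degree vertex is forced to be the endpoint of many $t$-vertex paths, which demands large degrees among its near neighbors, while a high-degree vertex directly generates many long paths and hence, again by \eqref{eq:plan-local}, must have many non-neighbors. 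I would combine these opposing pressures with the connectivity and girth lemmas of Section~\ref{Sec:Lemmas} to squeeze the degree sequence to a single value $r$. It is essential that this step use $t\ge 6$ in an essential way, since the uniquely $C_5$-saturated friendship graphs are \emph{not} regular; I would expect the additional rigidity to come from ruling out a dominating vertex, paralleling the $C_6$ and $C_7$ analysis.

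With $G$ assumed $r$-regular on $n$ vertices, I would pass to the adjacency matrix $A$ and set up a spectral feasibility condition. The entries of powers of $A$ count walks, and the number of $t$-vertex paths at a vertex is obtained by subtracting the contributions of walks with repeated vertices; under regularity these corrected counts are symmetric functions of the eigenvalues of $A$, with $r$ the Perron eigenvalue. Imposing \eqref{eq:plan-local} together with the $C_t$-freeness constraint over-determines the spectrum, in the same spirit as the Moore-bound arguments that eliminate all but finitely many strongly regular graphs, and I would argue that the resulting Diophantine condition on $(n,r,t)$ has no solution for $t\ge 6$.

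The main obstacle is the passage from walk counts to path counts. The matrix $A$ controls walks, not simple paths, and the correction terms that delete walks with repeated vertices are governed by the short-cycle structure of $G$; controlling these terms uniformly for all $t\ge 6$ is precisely what makes the general statement resist the elementary case analyses that settle $t=5,6,7$. I expect the regularity step and this walk-versus-path bookkeeping to be the two places where the argument is hardest to make uniform in $t$, which is exactly why the statement is offered here as a conjecture rather than established as a theorem.
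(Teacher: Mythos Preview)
The statement is Conjecture~\ref{conj:none}, which the paper leaves open: it is proved only for $t\in\{6,7\}$ (Theorems~\ref{thm:C6} and~\ref{thm:C7}), with finiteness for general $t\ge 6$ established in Theorem~\ref{thm:finite}. There is therefore no proof in the paper to compare against, and your own closing sentence already concedes that what you have written is a strategy rather than a proof.

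Two of your steps are genuine gaps, not routine omissions. First, the regularity claim is only gestured at, and the heuristic is muddled: by your own identity $p_v=n-1-\deg(v)$, a high-degree vertex is the endpoint of \emph{few} $t$-paths, so the clause ``a high-degree vertex directly generates many long paths and hence \dots\ must have many non-neighbors'' runs the implication the wrong way, and nothing here forces the degree sequence to collapse to a single value. Second, the spectral step rests on the assertion that under regularity the corrected path counts are ``symmetric functions of the eigenvalues of $A$,'' and this is false in general: the inclusion--exclusion converting walk counts to simple-path counts involves the local short-cycle distribution (triangles, $4$-cycles, thetas, \dots), and these are not spectral invariants even among regular graphs---cospectral regular mates can have different path profiles. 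The Moore-graph analogy works for $t=3$ precisely because the $3$-path matrix is $A^2$ minus a diagonal; for $t\ge 6$ there is no clean polynomial identity in $A$, so the ``over-determined spectrum'' you describe is not well-posed without further structural hypotheses you have not supplied. In short, neither the regularity reduction nor the spectral infeasibility has been carried out, and the second appears to require tools beyond eigenvalues alone.
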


We have verified Conjecture~\ref{conj:none} for $t=8$, but the proof is quite
long and does not contain any new ideas beyond those used in the proofs of
Theorems~\ref{thm:C6} and~\ref{thm:C7}; thus we do not include the proof here.

\section{Structural lemmas}\label{Sec:Lemmas}

In keeping with the convention of using $k$-cycle for a copy of $C_k$, we refer
to a path with $k$ vertices as a {\it $k$-path}.
We use $\langle v_1,\ldots,v_k\rangle$ to denote the $k$-path with vertices
$v_1,\ldots, v_k$ indexed in order.
We use $[v_1,\ldots,v_{k}]$ to denote the $k$-cycle with vertices
$v_1,\ldots, v_{k}$ indexed in order.
For vertices $x$ and $y$ in a graph $G$, we use $d_G(x,y)$ to denote the
distance between $x$ and $y$ and $d_G(x)$ for the degree of $x$.

We begin with an elementary observation about uniquely $C_t$-saturated graphs.

\begin{observation}
Any two vertices in a uniquely $C_t$-saturated graph are the endpoints of at
most one $t$-path, and such a path exists if and only if they are not adjacent.
\end{observation}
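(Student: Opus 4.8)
The plan is to translate the condition on $t$-\emph{cycles} created by adding an edge into a condition on $t$-\emph{paths} joining its endpoints. The key elementary observation is that a $t$-cycle containing a specified edge $xy$ is the same thing, after deleting that edge, as a $t$-path from $x$ to $y$; conversely, a $t$-path from $x$ to $y$ together with the edge $xy$ forms a $t$-cycle. So I would set up this correspondence and then exploit that $G$ is $C_t$-free together with the uniqueness hypothesis.

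First I would dispose of the case that $x$ and $y$ are adjacent in $G$. If $\langle x=v_1,v_2,\ldots,v_t=y\rangle$ were a $t$-path in $G$, then since $xy\in E(G)$ the vertices $v_1,\ldots,v_t$ in this cyclic order form a copy of $C_t$ in $G$, contradicting that $G$ contains no $C_t$. Hence adjacent vertices are the endpoints of no $t$-path, which is consistent with both assertions of the statement (at most one, and none unless nonadjacent).

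Next I would handle the nonadjacent case. Let $x$ and $y$ be nonadjacent, so $xy$ is an edge of $\Gb$. By hypothesis $G+xy$ contains exactly one copy of $C_t$. Since $G$ itself contains no $C_t$, every copy of $C_t$ in $G+xy$ must use the edge $xy$; by the correspondence above, the copies of $C_t$ in $G+xy$ are in bijection with the $t$-paths from $x$ to $y$ in $G$. Therefore there is exactly one $t$-path from $x$ to $y$ in $G$. Combining the two cases shows that any two vertices are the endpoints of at most one $t$-path, and that such a path exists precisely when they are nonadjacent.

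I do not anticipate a genuine obstacle here; the only thing to state carefully is the bijection between $t$-cycles through $xy$ in $G+xy$ and $t$-paths from $x$ to $y$ in $G$, making sure it uses that $G$ is $C_t$-free so that no $C_t$ of $G+xy$ avoids the new edge. Everything else is immediate from the definition of uniquely $C_t$-saturated.
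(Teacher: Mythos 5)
Your proof is correct and is exactly the argument the authors have in mind; the paper states this as an Observation without proof precisely because the bijection between $t$-cycles through $xy$ in $G+xy$ and $t$-paths from $x$ to $y$ in $G$ (using that $G$ is $C_t$-free) is immediate. Nothing is missing.
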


A {\it block} in a graph is a maximal subgraph not having a cut-vertex.  Thus
it is a maximal $2$-connected subgraph or has a single edge that is a cut-edge.

\begin{lemma}\label{smallblock}
Every block in a uniquely $C_{t}$-saturated graph is uniquely $C_t$-saturated.
In particular, blocks with fewer than $t$ vertices are complete graphs.
\end{lemma}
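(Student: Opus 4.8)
The plan is to transfer the unique-saturation property from the whole graph to each block by showing that non-edges of a block are non-edges of the graph and that any $t$-cycle created by adding such an edge is forced to live inside the block. First I would dispose of the degenerate cases: a block that is a single vertex or a single edge has no non-adjacent pair, so it is vacuously uniquely $C_t$-saturated (and in particular complete). So from now on assume $B$ is a $2$-connected block with at least three vertices in a uniquely $C_t$-saturated graph $G$.

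Next I would record two easy facts. Since $G$ is uniquely $C_t$-saturated it contains no $C_t$, hence neither does its subgraph $B$. Moreover $B$ is an induced subgraph of $G$: if $x,y\in V(B)$ with $xy\in E(G)$, then $xy$ lies in some block $B'$ of $G$; since distinct blocks share at most one vertex and both $B$ and $B'$ contain $x$ and $y$, we get $B'=B$, so $xy\in E(B)$. Consequently every non-adjacent pair $\{x,y\}$ in $B$ is a non-edge of $G$ as well.

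The heart of the argument is the following claim: for a non-edge $e=xy$ with $x,y\in V(B)$, every copy of $C_t$ in $G+e$ is contained in $B+e$. Since $G$ is $C_t$-free, such a cycle $C$ must use $e$, so $C-e$ is an $x,y$-path $P$ in $G$. I would then decompose $P$ into a succession of ``ears'' relative to $B$, namely maximal subpaths whose endpoints lie in $V(B)$ and whose internal vertices do not (a one-edge ear already lies in $B$ because $B$ is induced, so it changes nothing), and add these ears to $B$ one at a time. Since attaching a path with distinct endpoints in a $2$-connected graph and no internal vertices in it preserves $2$-connectedness, the graph $B\cup P$ is $2$-connected. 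By maximality of the block $B$ this forces $P\subseteq B$, hence $C\subseteq B+e$, proving the claim.

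Given the claim the conclusion is immediate: for any non-edge $e$ of $B$, the $t$-cycles of $B+e$ are exactly the $t$-cycles of $G+e$, and since $e$ is a non-edge of $G$ and $G$ is uniquely $C_t$-saturated there is exactly one of them; together with $B$ being $C_t$-free this says $B$ is uniquely $C_t$-saturated. Finally, if $|V(B)|<t$ and $B$ had a non-edge $e$, then $B+e$ would contain a $t$-cycle on fewer than $t$ vertices, which is impossible; hence $B$ is complete. I expect the only real subtlety to be the ear argument in the claim — carefully checking that attaching $P$ to $B$ cannot escape the block — while everything else is routine bookkeeping about blocks.
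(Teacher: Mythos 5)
Your proof is correct and follows essentially the same route as the paper: the whole argument rests on the observation that any path between two vertices of a block stays inside the block (so a non-complete block must contain the unique $t$-path between a nonadjacent pair, forcing $|V(B)|\ge t$ and transferring unique saturation to $B$). The paper simply asserts this containment from the maximality of $B$ as a $2$-connected subgraph, whereas you justify it explicitly with the ear-addition argument; the extra detail is welcome but not a different method.
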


\begin{proof}
Let $x$ and $y$ be nonadjacent vertices in a non-complete block $B$ of such a
graph $G$.  Since $B$ is a maximal $2$-connected subgraph, the unique $t$-path
in $G$ with endpoints $x$ and $y$ is contained in $B$.  Thus $|V(B)|\ge t$,
and $B$ is uniquely $C_t$-saturated.
\end{proof}

We next bound the size of complete blocks in nontrivial uniquely
$C_{t}$-saturated graphs.

\begin{lemma}\label{completeblock}
Every complete block in a nontrivial uniquely $C_{t}$-saturated graph has at
most three vertices.
\end{lemma}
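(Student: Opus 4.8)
The plan is to argue by contradiction: suppose $G$ is a nontrivial uniquely $C_t$-saturated graph containing a complete block $B=K_m$ with $m\ge 4$. I want to produce two nonadjacent vertices joined by either no $t$-path or by two distinct $t$-paths, contradicting the Observation. Since $G$ is nontrivial and connected-at-the-block-level, $B$ must have a cut-vertex of $G$ on it (otherwise $G=K_m$ with $m\ge 4$, which is $C_4$-saturated only in a degenerate sense and certainly not $C_t$-saturated for $t\ge 5$ — more carefully, if $G=K_m$ then $\Gb$ is empty when $m\le t-1$, so $G$ is trivial, contradicting nontriviality; and $K_m$ with $m\ge t$ contains $C_t$). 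So let $w$ be a cut-vertex of $G$ lying in $B$, and let $H$ be the union of the other blocks at $w$ together with everything hanging off them; pick a vertex $z\notin V(B)$ that is reachable from $w$ through $H$.

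First I would record that for any nonadjacent pair $x,y$ with $x\in V(B)\setminus\{w\}$ and $y=z$, every $x$-$z$ path in $G$ must pass through $w$, since $w$ separates $B$ from $H$. Now the key move: because $B=K_m$ with $m\ge 4$, the segment of a $t$-path inside $B$ from $x$ to $w$ can be rerouted. Concretely, if $\langle x,\dots,w\rangle$ is the portion of the unique $t$-path inside $B$, and it uses at least one internal vertex of $B$, then since $B$ is complete on $\ge 4$ vertices we can permute the internal vertices (or swap two of them) to get a different $x$-$w$ path in $B$ of the same length, yielding a second $t$-path from $x$ to $z$ — contradiction. The remaining case is that the $t$-path meets $B$ only in the edge $xw$; then I would instead choose $x$ to be a vertex of $B$ for which the path genuinely enters the interior of $B$, or push the difficulty onto the $H$ side: take $x'\in V(B)\setminus\{w,x\}$ (possible since $m\ge 4$ leaves room), and compare the $t$-paths for the pairs $(x,z)$ and $(x',z)$, both of which route $w\to z$ through the same subgraph $H$; the portion in $H$ for $(x,z)$ has length $t-2$ (path $x,w,\dots,z$) while for $(x',z)$ it also has length $t-2$, so these $H$-portions are $w$-$z$ paths of equal length, and uniqueness in $G$ forces them to coincide as the \emph{unique} $w$-$z$ path of that length in $G$; but then the $t$-path for $(x,x')$ — which exists since $x,x'$ are nonadjacent only if... wait, they are adjacent in $B$.

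So the cleanest route, which I would actually carry out, is: fix nonadjacent $x\in V(B)\setminus\{w\}$ and $z\in V(H)$, let $P$ be the unique $t$-path from $x$ to $z$, write $P=P_B\cup P_H$ where $P_B$ is the $x$-$w$ subpath in $B$ and $P_H$ the $w$-$z$ subpath in $H$. If $P_B$ has $\ge 3$ vertices, reroute inside $K_m$ (using $m\ge 4$, there are at least two distinct $x$-$w$ paths of each admissible length $\le m$ in $K_m$) to get a second $t$-path, contradiction. If $P_B=\langle x,w\rangle$, then $P_H$ is a $(t-1)$-path from $w$ to $z$ in $H$; but now pick any $x''\in V(B)\setminus\{w,x\}$ and any neighbor $x'''\in V(B)\setminus\{w,x,x''\}$ (here I use $m\ge 4$), and consider the nonadjacent pair obtained by looking at $z$ together with a vertex of $B$ whose unique $t$-path is \emph{forced} to use interior vertices of $B$: since $P_H$ has $t-1$ vertices, a $t$-path from $z$ must have exactly $2$ vertices in $B$, so for \emph{every} vertex $u\in V(B)\setminus\{w\}$ the unique $t$-path from $z$ to $u$ is exactly $P_H$ followed by the edge $wu$ — but then any two such vertices $u_1,u_2$ (nonadjacent? no, adjacent) — this forces $d_G(u,z)=t-1$ for all $u\in V(B)\setminus\{w\}$, so in particular the edges $wu$ for $u\in V(B)\setminus\{w\}$ are all "last edges" of a $t$-path from $z$; adding the edge $z u$ to $G$ (it is in $\Gb$) must create exactly one $C_t$, which must use the path of length $t-1$ from $z$ to $u$, namely $P_H wu$ reversed — and there is exactly one such — fine, no contradiction yet, so instead: take two \emph{nonadjacent} vertices, one in $V(H)\setminus\{w\}$ near $z$ and one in $V(B)\setminus\{w\}$, repeat; iterate the rerouting on whichever side admits an interior detour, and the presence of a $K_4$ block guarantees detours on the $B$ side.

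\textbf{The main obstacle.} The genuine difficulty is the case where the unique $t$-path leaves $B$ almost immediately (using only the edge $xw$), so that the $K_m$-structure is not visibly exploited by that particular pair; one must then pick the pair of vertices cleverly — or argue that \emph{some} pair's unique $t$-path is forced to traverse $\ge 3$ vertices of $B$, for which I would use a counting/length argument showing that not all $t$-paths from a fixed far vertex $z$ can avoid the interior of a block on $\ge 4$ vertices, or use the first structural lemma (blocks are uniquely $C_t$-saturated) to note $K_m$ with $m\ge 4$ must have $m\ge t$, hence $K_m\supseteq C_t$, the desired contradiction, \emph{provided} one first shows a complete block with $4\le m\le t-1$ cannot occur as a block of a nontrivial graph — and that last reduction is likely the cleanest form of the whole argument: a complete block with at least $4$ and at most $t-1$ vertices, together with a cut-vertex $w$ on it and an external path out of $w$ of length $\ge 1$, lets us build for suitable nonadjacent $x,z$ two distinct $t$-paths by rerouting the $x$-to-$w$ portion through the complete block (here $m\ge 4$ is exactly what gives two internally disjoint rerouting options once the path has length $\ge 3$ inside the block), so I expect the write-up to reduce to making that rerouting rigorous and handling the short-segment boundary case by choosing the endpoint in $B$ to be at block-distance $2$ from $w$.
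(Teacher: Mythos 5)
Your first case---the unique $t$-path $P$ from $x\in V(B)\setminus\{w\}$ to $z$ uses at least two edges of $B$, so an internal vertex of its $B$-portion can be swapped for one of the unused vertices of $K_m$---is correct and is exactly the first half of the paper's argument. But the case you yourself flag as ``the main obstacle,'' where $P$ meets $B$ only in the edge $xw$, is left genuinely unresolved, and that is where the one real idea of the proof lives. The fix is not to hunt for a pair whose unique $t$-path is forced into the interior of $B$; it is to change the \emph{other} endpoint. Write $P=\langle x,w,\ldots,x',z\rangle$, where $x'$ is the neighbor of $z$ on $P$; since $t\ge4$, the subpath of $P$ from $w$ to $z$ has at least three vertices, so $x'\ne w$ and hence $x'\notin V(B)$ and $x,x'$ are nonadjacent. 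The subpath of $P$ from $w$ to $x'$ has $t-2$ vertices and avoids $B-w$, so prepending any $3$-path $\langle x,u,w\rangle$ with $u\in V(B)\setminus\{x,w\}$ produces a $t$-path with endpoints $x$ and $x'$; because $m\ge4$ there are at least two choices of $u$, giving two distinct such $t$-paths and the desired contradiction. None of the escapes you sketch supplies this step: a vertex of a complete block cannot be at ``block-distance $2$'' from $w$; and Lemma~\ref{smallblock} forces at least $t$ vertices only in \emph{non-complete} blocks, so it cannot be invoked to conclude $m\ge t$ for a complete block. As written, your proposal establishes the lemma only under the unverified assumption that some unique $t$-path uses an interior vertex of $B$.
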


\begin{proof}
The claim is trivial for $t=3$, so assume $t\ge 4$.  Let $G$ be a non-complete
graph.  Let $B$ and $B'$ be blocks in $G$, with a common vertex $v$, such that
$B$ is a complete graph and has at least four vertices.  Let $u$ and $x$ be
vertices other than $v$ in $B$ and $B'$, respectively.  The vertices $u$ and
$x$ are nonadjacent, and every $t$-path $P$ with endpoints $u$ and $x$ contains
$v$.

If $P$ is unique, then the portion of $P$ in $B$ must have length $1$, since
$B$ has at least four vertices.  Since $t\ge4$, this implies that $P$ has at
least two edges in $B'$.  Hence the neighbor $x'$ of $x$ on $P$ is not $v$.
Now $x'$ and $u$ are not adjacent, but there are multiple $3$-paths in $B$
with endpoints $u$ and $v$, so $G$ is not uniquely $C_{t}$-saturated.
\end{proof}

Using Lemma~\ref{completeblock}, we can restrict our attention to $2$-connected
graphs when $t\ge 6$.

\begin{lemma}\label{twoblock}
If $t \geq 6$, then every nontrivial uniquely $C_{t}$-saturated graph contains
a block that is not a complete graph.  In fact, no two blocks with a common
vertex are complete.
\end{lemma}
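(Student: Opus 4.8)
The plan is to prove the stronger ``in fact'' assertion and then deduce the first statement from it. Suppose for contradiction that $B_1$ and $B_2$ are distinct blocks of a nontrivial uniquely $C_t$-saturated graph $G$, with $t\ge 6$, that share a vertex $v$ and are both complete. Distinct blocks meet in at most one vertex, so $V(B_1)\cap V(B_2)=\{v\}$, and by Lemma~\ref{completeblock} each of $B_1,B_2$ has two or three vertices.

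I would then pick $u\in V(B_1)\setminus\{v\}$ and $x\in V(B_2)\setminus\{v\}$. Since $B_1,B_2$ are complete, $uv,xv\in E(G)$, so an edge $ux$ would make $[u,v,x]$ a triangle, which lies in a single block and would force $B_1=B_2$; hence $u$ and $x$ are nonadjacent. The Observation then supplies a $t$-path $P$ from $u$ to $x$ in $G$. As $v$ is a cut-vertex with $V(B_1)\setminus\{v\}$ and $V(B_2)\setminus\{v\}$ in distinct components of $G-v$, the path $P$ passes through $v$; write $P=P_1\cup P_2$ with $P_1$ joining $u$ to $v$, $P_2$ joining $v$ to $x$, and $P_1\cap P_2=\{v\}$, so that $t-1=|E(P)|=|E(P_1)|+|E(P_2)|$.

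The crux, and the step I expect to require the most care, is to show $V(P_i)\subseteq V(B_i)$ for each $i$. For $P_1$: if $P_1$ met a vertex outside $B_1$, take a maximal run $q_i,\dots,q_j$ of consecutive internal vertices of $P_1$ lying outside $B_1$; its predecessor $q_{i-1}$ and successor $q_{j+1}$ then lie in $V(B_1)$, so $q_{i-1}q_{j+1}\in E(B_1)$ by completeness, and adjoining this edge to the subpath $q_{i-1},q_i,\dots,q_j,q_{j+1}$ gives a cycle through an edge of $B_1$. Since every cycle lies in one block, that cycle lies in $B_1$, contradicting $q_i\notin V(B_1)$. Granting the claim, each $P_i$ is a path in a complete graph on at most three vertices, so $|E(P_i)|\le 2$, whence $t-1\le 4$, contradicting $t\ge 6$. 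This proves that no two blocks sharing a vertex are both complete.

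Finally, to obtain the first assertion, suppose every block of $G$ were complete. Being uniquely $C_t$-saturated, $G$ is connected (two vertices in different components would be nonadjacent yet joined by no $t$-path) and has at least $t\ge 6$ vertices, so $G$ is not a single block---that block would be a complete graph on at least six vertices, contradicting Lemma~\ref{completeblock}. Hence $G$ has at least two blocks, and since the block--cut tree is connected, some cut-vertex lies in two complete blocks, contradicting what was just shown. Therefore $G$ contains a block that is not complete.
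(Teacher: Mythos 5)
Your proof is correct and follows essentially the same route as the paper: by Lemma~\ref{completeblock} the union of the two complete blocks has at most five vertices, so the forced $t$-path between $u$ and $x$ (which must pass through $v$ and, as you verify, cannot escape either block) cannot exist for $t\ge 6$. You merely spell out the confinement of the path to the blocks and the derivation of the first assertion from the second, both of which the paper leaves implicit.
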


\begin{proof}
Let $G$ be a nontrivial uniquely $C_t$-saturated graph.  By
Lemma~\ref{completeblock}, the union of two complete blocks with a common
vertex $v$ has at most five vertices.  Hence if $u$ and $x$ are vertices 
of these blocks other than $v$, then $u$ and $x$ are nonadjacent but are
not the endpoints of a $t$-path.  The contradiction implies that no two
neighboring blocks can be complete.
\end{proof}

For $t\ge6$, Lemma~\ref{twoblock} reduces Conjecture~\ref{conj:none} to
the consideration of $2$-connected graphs.

\begin{corollary}\label{2conn}
For $t\ge 6$, if there are no $2$-connected nontrivial uniquely $C_t$-saturated
graphs, then there are no nontrivial uniquely $C_t$-saturated graphs.
\end{corollary}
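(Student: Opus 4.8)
The plan is to deduce the corollary directly from the two preceding lemmas by contraposition. Suppose $G$ is a nontrivial uniquely $C_t$-saturated graph; I will locate inside it a $2$-connected nontrivial uniquely $C_t$-saturated graph, which then contradicts the hypothesis of the corollary.

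First I would apply Lemma~\ref{twoblock} to $G$ to obtain a block $B$ that is not a complete graph. Recalling that a block is either a maximal $2$-connected subgraph or a single cut-edge, and that a single edge is a complete graph, $B$ must be $2$-connected. Next I would invoke Lemma~\ref{smallblock}, which guarantees both that $B$ is uniquely $C_t$-saturated and that every block on fewer than $t$ vertices is complete; since $B$ is not complete it therefore has at least $t$ vertices, i.e.\ it is nontrivial. Hence $B$ is a $2$-connected nontrivial uniquely $C_t$-saturated graph, and taking the contrapositive completes the proof.

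I do not expect a genuine obstacle here: the argument is a short bookkeeping exercise over the structural lemmas already established. The one point requiring a moment's care is the step upgrading ``block that is not complete'' to ``$2$-connected subgraph on at least $t$ vertices,'' which relies on the fact that a lone cut-edge is a complete graph and is thus excluded once $B$ is known to be non-complete. An alternative phrasing would avoid contraposition and instead start from a hypothetical nontrivial example and derive the existence of a $2$-connected one, but the logical content is identical.
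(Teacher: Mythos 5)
Your argument is correct and is exactly the route the paper intends: the corollary is stated as an immediate consequence of Lemma~\ref{twoblock} (to get a non-complete block) combined with Lemma~\ref{smallblock} (to see that such a block is itself uniquely $C_t$-saturated, has at least $t$ vertices, and, not being a single edge, is $2$-connected). No gaps.
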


We next forbid certain subgraphs, aiming to forbid certain cycle lengths.
Let $H_{m,\ell}$ be the graph that consists of a $2m$-cycle with a pendant path
of length $\ell$ (see Figure~\ref{fig:Hkl}).

\begin{figure}[h]
\centering
\begin{tikzpicture}

\fill (0,.5) circle (4pt);
\fill (.5,1) circle (4pt);
\fill (.5,0) circle (4pt);
\fill (1,.5) circle (4pt);
\fill (1.8,.5) circle (4pt);
\fill (2.6,.5) circle (4pt);
\fill (3.4,.5) circle (4pt);
\fill (4.2,.5) circle (4pt);
\draw[line width=2pt] (0,.5)--(.5,1);
\draw[line width=2pt] (0,.5)--(.5,0);
\draw[line width=2pt] (.5,1)--(1,.5);
\draw[line width=2pt] (.5,0)--(1,.5);
\draw[line width=2pt] (1,.5)--(4.2,.5);
\node at (1.5,-.6) {$H_{2,4}$};
\fill (5.7,.5) circle (4pt);
\fill (6.2,1) circle (4pt);
\fill (6.2,0) circle (4pt);
\fill (7,1) circle (4pt);
\fill (7,0) circle (4pt);
\fill (7.5,.5) circle (4pt);
\fill (8.3,.5) circle (4pt);
\fill (9.1,.5) circle (4pt);
\fill (9.9,.5) circle (4pt);
\draw[line width=2pt] (5.7,.5)--(6.2,1);
\draw[line width=2pt] (5.7,.5)--(6.2,0);
\draw[line width=2pt] (6.2,1)--(7,1);
\draw[line width=2pt] (6.2,0)--(7,0);
\draw[line width=2pt] (7,1)--(7.5,.5);
\draw[line width=2pt] (7,0)--(7.5,.5);
\draw[line width=2pt] (7.5,.5)--(9.9,.5);
\node at (7.6,-.6) {$H_{3,3}$};
\fill (11.4,.5) circle (4pt);
\fill (11.9,1) circle (4pt);
\fill (11.9,0) circle (4pt);
\fill (12.7,1) circle (4pt);
\fill (12.7,0) circle (4pt);
\fill (13.5,1) circle (4pt);
\fill (13.5,0) circle (4pt);
\fill (14,.5) circle (4pt);
\fill (14.8,.5) circle (4pt);
\fill (15.6,.5) circle (4pt);
\draw[line width=2pt] (11.4,.5)--(11.9,1);
\draw[line width=2pt] (11.4,.5)--(11.9,0);
\draw[line width=2pt] (11.9,1)--(13.5,1);
\draw[line width=2pt] (11.9,0)--(13.5,0);
\draw[line width=2pt] (13.5,1)--(14,.5);
\draw[line width=2pt] (13.5,0)--(14,.5);
\draw[line width=2pt] (14,.5)--(15.6,.5);
\node at (13.5,-.6) {$H_{4,2}$};
\end{tikzpicture}
\caption{\label{fig:Hkl} Forbidden subgraphs for uniquely $C_7$-saturated graphs.}
\end{figure}
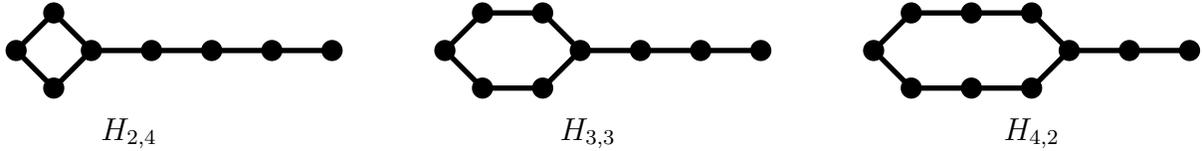

\begin{lemma}\label{cycpath}
If $k<t$ with $t\ge3$, then no uniquely $C_{t}$-saturated graph contains
$H_{k,t-k-1}$.
\end{lemma}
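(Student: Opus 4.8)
The plan is to contradict the \emph{Observation} recorded above---that any two vertices of a uniquely $C_t$-saturated graph are the endpoints of at most one $t$-path---by exhibiting, inside any hypothetical copy of $H_{k,t-k-1}$, two distinct $t$-paths sharing a pair of endpoints.

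So suppose some uniquely $C_t$-saturated graph $G$ contains a copy of $H_{k,t-k-1}$; label the $2k$-cycle as $[c_0,\ldots,c_{2k-1}]$ and the attached pendant path as $\langle c_0,p_1,\ldots,p_{t-k-1}\rangle$, and let $w=p_{t-k-1}$ denote its far endpoint (with the convention $w=c_0$ when $k=t-1$, where the pendant path is trivial). We may assume $k\ge 2$, since for $k=1$ the graph $H_{1,t-2}$ is not simple and the statement holds vacuously. The key step is to consider the antipodal cycle vertex $c_k$ together with $w$: walk from $w$ down the pendant path to $c_0$, which costs $t-k-1$ edges, and then traverse either arc of the $2k$-cycle from $c_0$ to $c_k$, each costing $k$ edges. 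Because the pendant path is internally disjoint from the cycle, each of these two walks is a path; and each uses the $t-k-1$ pendant vertices $p_1,\ldots,p_{t-k-1}$ together with $k+1$ cycle vertices, for a total of exactly $t$ vertices. Hence $w$ and $c_k$ are joined by two $t$-paths.

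It then remains only to check that these two $t$-paths are genuinely distinct and that $w\ne c_k$. The former holds because the two arcs of the $2k$-cycle from $c_0$ to $c_k$ are edge-disjoint once $2k\ge 4$---e.g.\ the edge $c_0c_1$ lies on one but not the other---which is exactly where $k\ge 2$ is used; the latter holds since $c_k$ is a cycle vertex while $w$ is either an internal pendant vertex or equals $c_0$, and $c_0\ne c_k$ because $k\ge 1$. Two distinct $t$-paths with common endpoints contradict the Observation, so $G$ cannot contain $H_{k,t-k-1}$.

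I do not expect a real obstacle: the whole argument is a length bookkeeping, and the only point demanding care is confirming that ``pendant path of length $t-k-1$'' plus ``half of a $2k$-cycle'' assembles into a path on exactly $t$ vertices (not $t\pm1$), and that the two halves of the even cycle are distinct paths, which forces the minor hypothesis $k\ge 2$. The degenerate endpoint $t=k+1$, i.e.\ $H_{t-1,0}=C_{2t-2}$, is subsumed by the same two paths---the two halves of $C_{2t-2}$---which are distinct whenever $t\ge 3$.
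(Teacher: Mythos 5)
Your proof is correct and takes exactly the same route as the paper's (one-line) argument: the far endpoint of the pendant path and the antipodal vertex of the $2k$-cycle are at distance $t-1$ and are joined by two $t$-paths, one per arc of the even cycle, contradicting the uniqueness of $t$-paths between a vertex pair. Your additional bookkeeping for the degenerate cases $k=1$ and $\ell=0$ is harmless and slightly more careful than the paper, which leaves those checks implicit.
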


\begin{proof}
The diameter of $H_{k,t-k-1}$ is $t-1$, and there are two $t$-paths connecting
two vertices at distance $t-1$.
\end{proof}

\begin{lemma}\label{lem:C2t-2C2t-4}
For $t\ge 5$, a uniquely $C_t$-saturated graph $G$ cannot contain $C_{2t-2}$ or
$C_{2t-4}$.
\end{lemma}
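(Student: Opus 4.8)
Here is how I would approach the statement.

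\medskip

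\noindent\textbf{The case $C_{2t-2}$.} Suppose $G$ contains a copy $[v_0,\dots,v_{2t-3}]$ of $C_{2t-2}$, and look at the antipodal pair $v_0,v_{t-1}$. The two arcs of this cycle joining $v_0$ to $v_{t-1}$ each have $t-1$ edges, so each is a $t$-path. If $v_0v_{t-1}\notin E(G)$, these are two distinct $t$-paths joining a nonadjacent pair, contradicting the Observation (equivalently, adding $v_0v_{t-1}$ would complete two copies of $C_t$). If $v_0v_{t-1}\in E(G)$, then an arc together with this edge is a copy of $C_t$ in $G$, which is impossible since $G$ is $C_t$-saturated, hence $C_t$-free.

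\medskip

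\noindent\textbf{Reduction for $C_{2t-4}$.} Here the cycle alone is not enough, since no pair of vertices of $C_{2t-4}$ is joined by two of its arcs of length $t-1$; so I would first reduce to a Hamiltonian picture. Suppose $G$ contains a copy $C=[v_0,\dots,v_{2t-5}]$ of $C_{2t-4}$ (indices mod $2t-4$). Applying Lemma~\ref{cycpath} with $k=t-2$ (so $H_{t-2,1}$ is $C_{2t-4}$ with one pendant edge) shows that no vertex of $C$ has a neighbour outside $V(C)$; since then $V(C)$ sends no edge to the rest of $G$, any edge from $V(C)$ to a vertex outside would be a bridge in $G$ plus that edge and so would complete no cycle, contradicting saturation. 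Hence $V(G)=V(C)$, so $G$ is $C$ together with a (possibly empty) set of chords. If there are no chords then $G=C$, and adding the non-edge joining two antipodal vertices of $C$ completes two copies of $C_{t-1}$ but no $C_t$, again contradicting saturation. So $G$ contains at least one chord of $C$.

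\medskip

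\noindent\textbf{The key edge-addition.} Pick a chord of $C$ and relabel $C$ as $v_0,\dots,v_{2t-5}$ so that this chord is $v_0v_\delta$, where $\delta$ is the distance between its ends along $C$; thus $2\le\delta\le t-2$. Consider the two distinct vertices $v_1$ and $v_{\delta+t-3}$ and the two walks
\[
P_1=\langle v_1,v_0,v_\delta,v_{\delta+1},\dots,v_{\delta+t-3}\rangle,\qquad
P_2=\langle v_1,v_2,\dots,v_\delta,v_0,v_{2t-5},v_{2t-6},\dots,v_{\delta+t-3}\rangle .
\]
Here $P_1$ uses the edge $v_1v_0$ of $C$, then the chord $v_0v_\delta$, then an arc of $C$; and $P_2$ uses an arc of $C$, then the chord $v_\delta v_0$, then an arc of $C$ traversed the other way. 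A routine check using $2\le\delta\le t-2$ and $t\ge5$ shows that each $P_i$ is an honest path on exactly $t$ vertices, that both join $v_1$ to $v_{\delta+t-3}$, and that $v_{\delta+1}\in V(P_1)\setminus V(P_2)$, so $P_1\ne P_2$. If $v_1v_{\delta+t-3}\notin E(G)$, then adding it completes the two distinct copies $P_1+v_1v_{\delta+t-3}$ and $P_2+v_1v_{\delta+t-3}$ of $C_t$, violating uniqueness; if $v_1v_{\delta+t-3}\in E(G)$, then $P_1$ together with this edge is a copy of $C_t$ already present in $G$, violating $C_t$-freeness. Either way we reach a contradiction, which completes the proof.

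\medskip

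\noindent\textbf{Main obstacle.} The $C_{2t-2}$ case and the reduction are routine; the real work is locating the edge to add in the $C_{2t-4}$ case. The point is that the endpoints $v_1,v_{\delta+t-3}$ must be positioned so that \emph{both} a short route through the chord and a long route through the chord have length exactly $t-1$, and this is precisely what pins down the index $\delta+t-3$. Once that is identified, verifying that the two routes are genuine, vertex-distinct paths is just bookkeeping, but it must be carried out carefully with the modular indices in mind.
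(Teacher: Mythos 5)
Your proof is correct and follows essentially the same route as the paper: the $C_{2t-2}$ case is the two antipodal $t$-paths (the paper phrases this as $C_{2t-2}=H_{t-1,0}$ and cites Lemma~\ref{cycpath}), and for $C_{2t-4}$ both arguments use the forbidden subgraph $H_{t-2,1}$ to force $V(G)=V(C)$, then rule out chords, then observe that the chordless cycle violates saturation. The only point of departure is the chord step: the paper splits by the parity of the two cycle lengths a chord creates (even length yields a forbidden $H_{k,t-k-1}$, odd length yields two $t$-paths through a symmetrically placed chord), whereas you give a single uniform construction of two $t$-paths between $v_1$ and $v_{\delta+t-3}$ valid for every chord position --- a slightly cleaner bookkeeping of the same idea, and your index arithmetic checks out.
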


\begin{proof}
Note that $C_{2t-2}=H_{t-1,0}$, so Lemma~\ref{cycpath} applies.  If $G$
contains $C_{2t-4}$, then avoiding $H_{t-2,1}$ requires $|V(G)|=2t-4$.  Let
$C$ be a spanning cycle in $G$, with $C=[\VEC v0{2t-5}]$ (indices taken modulo
$2t-4$).  If $G$ contains a chord of $G$, then it creates cycles of lengths
$l$ and $2t-2-l$, for some $l$.  If $l=2k$, then $H_{k,t-k-1}\esub G$.

If $l=2k+1$ is odd, then we may assume by symmetry that the chord is
$v_kv_{-k}$.  Now $G$ contains two $t$-paths with endpoints $v_0$ and 
$v_{t-2}$, using the chord in opposite directions.

Hence $G=C_{2t-4}$, but now opposite vertices are not connected by any $t$-path.
\end{proof}

The {\it girth} of a graph is the minimum length of a cycle in it.

\begin{lemma}\label{girth}
For $t\ge 5$, a uniquely $C_{t}$-saturated graph $G$ has girth at most $t+1$.
\end{lemma}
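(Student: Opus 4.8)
The plan is to argue by contradiction. Suppose some uniquely $C_t$-saturated graph $G$ has girth at least $t+2$; in particular $G$ contains a cycle, and I let $C=[c_0,\dots,c_{g-1}]$ be a shortest cycle in $G$, of length $g\ge t+2$. First observe that $G$ cannot be complete, since a complete graph containing a cycle has girth $3<t+2$; moreover $g\ge t+2>t$ forces $G$ to have more than $t$ vertices. Hence $G$ has a pair of nonadjacent vertices, and by the Observation every nonadjacent pair is joined by a (unique) $t$-path lying in $G$.

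The idea is to pick two nonadjacent vertices that lie very close together, so that the forced $t$-path between them, combined with a short detour, closes up into a cycle shorter than $g$. On the shortest cycle $C$, the vertices $c_0$ and $c_2$ are at distance exactly $2$ (the two arcs of $C$ between them have lengths $2$ and $g-2\ge t>2$), so $c_0\not\sim c_2$. Let $S=\langle c_0,s_2,\dots,s_{t-1},c_2\rangle$ be the unique $t$-path in $G$ from $c_0$ to $c_2$.

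Now $S$ and the $2$-edge path $\langle c_0,c_1,c_2\rangle$ are two distinct $c_0$--$c_2$ paths in $G$ (distinct since $t-1\ge 4>2$). The symmetric difference of their edge sets is a nonempty subgraph of $G$ in which every vertex has even degree --- a routine check at $c_0$, $c_1$, $c_2$ and at the interior vertices of $S$ --- so it contains a cycle, and that cycle uses at most $(t-1)+2=t+1$ edges. This yields a cycle of length at most $t+1<g$ in $G$, contradicting the choice of $C$ and completing the proof.

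I do not anticipate a serious obstacle here; the only points needing a little care are the bookkeeping that a $t$-path genuinely exists in $G$ (which follows from $G$ being noncomplete with more than $t$ vertices) and the verification that the symmetric-difference subgraph is nonempty, so that it really does contain a cycle. The single substantive idea is the choice of the nonadjacent pair $c_0,c_2$ at distance $2$: using instead, say, the endpoints of a generic $t$-path would only produce a cycle of length up to $2t-3$, which is not short enough to contradict girth $\ge t+2$.
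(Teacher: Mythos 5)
Your proof is correct and follows essentially the same route as the paper: take a nonadjacent pair at distance $2$, use the forced unique $t$-path between them, and close it up with the $2$-path through their common neighbor to get a cycle of length at most $t+1$. The only cosmetic differences are that you locate the distance-$2$ pair on a shortest cycle (which also cleanly justifies its existence) and extract the short cycle via a symmetric-difference parity argument, where the paper just splits into the two cases of whether the $t$-path passes through the common neighbor.
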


\begin{proof}
Let $x$ and $y$ be two vertices in $G$ such that $d_G(x,y)=2$, and let $z$ be a
common neighbor of $x$ and $y$.  Let $P$ be the unique $t$-path with endpoints
$x$ and $y$.  If $P$ does not contain $z$, then the union of $P$ and the path
$\langle x,z,y\rangle$ is a $(t+1)$-cycle.  If $P$ contains $z$, then the union
of $P$ and $\langle x,z,y\rangle$ contains a cycle with length at most $t$.
\end{proof}

Two vertices having the same neighborhood are {\it twins}.

\begin{lemma}\label{twins}
For $t\ge4$, a uniquely $C_t$-saturated graph cannot contain twins.
\end{lemma}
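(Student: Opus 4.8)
The goal is to show that if $x$ and $y$ have the same neighborhood $N$ in a uniquely $C_t$-saturated graph $G$ (with $t \ge 4$), we reach a contradiction. Note first that twins are nonadjacent (a vertex is never its own neighbor), so $G + xy$ must contain exactly one copy of $C_t$ through the edge $xy$. Let me denote by $P$ the unique $t$-path in $G$ with endpoints $x$ and $y$; adding the edge $xy$ closes $P$ into the unique $t$-cycle through $xy$. Since $t \ge 4$, the path $P$ has length at least $3$, so the neighbor of $x$ on $P$ and the neighbor of $y$ on $P$ are distinct vertices, call them $a$ and $b$ respectively, both lying in $N$.

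The key idea is symmetry: because $x$ and $y$ are twins, swapping them is (nearly) a graph automorphism, so I would like to produce a second $t$-path between $x$ and $y$ by exploiting this swap. Write $P = \langle x, a, w_2, \ldots, w_{t-2}, b, y\rangle$, where $a, b \in N$ and the internal vertices $w_2, \ldots, w_{t-2}$ lie in $V(G) \setminus \{x, y\}$ (they cannot be $x$ or $y$ since those are the endpoints, and a $t$-path has no repeated vertices). Now consider the walk obtained by deleting $x$ and $y$ from $P$ and re-attaching them: since $a \in N = N(x) = N(y)$ and $b \in N = N(x) = N(y)$, the sequence $\langle y, a, w_2, \ldots, w_{t-2}, b, x\rangle$ is also a $t$-path in $G$ with endpoints $y$ and $x$ — it is literally $P$ with the labels $x$ and $y$ interchanged. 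For this to not contradict the uniqueness of $P$ (via the Observation), this relabeled path must equal $P$ as an unordered path, which forces $a = b$ (matching the second vertices from each end). But $a$ and $b$ are distinct because $t \ge 4$ gives $P$ length $\ge 3$. This contradiction completes the argument.

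The step that needs the most care is verifying that the relabeled path $\langle y, a, w_2, \ldots, w_{t-2}, b, x\rangle$ is genuinely a path in $G$ — i.e., that all its edges are present and all its vertices are distinct. Edges: the only edges touching $x$ or $y$ in the new path are $ya$ and $bx$, and both exist since $a,b \in N(x) = N(y)$; all other edges $w_iw_{i+1}$ are inherited from $P$. Vertices: the internal vertices are the same as those of $P$ (which are distinct), and $x \ne y$; the only subtlety is whether $x$ or $y$ could coincide with some $w_i$, but $w_i \notin \{x,y\}$ as noted. So the new path is valid, it is distinct from $P$ as an ordered path but shares the same endpoint pair $\{x,y\}$, and if it were the same unordered path we would need the vertex in position $2$ to agree, i.e. $a = b$ — impossible. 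Hence two distinct $t$-paths join $x$ and $y$, contradicting the Observation, and $G$ has no twins.

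I should double-check the edge case $t = 4$: then $P = \langle x, a, b, y\rangle$ has exactly two internal vertices $a, b$, with $a \ne b$ since $P$ is a path, and the swapped path is $\langle y, a, b, x\rangle$ — indeed a second $4$-path between $x$ and $y$ unless $a = b$, which is false. So the argument is uniform for all $t \ge 4$, and this is why the hypothesis $t \ge 4$ (rather than $t \ge 3$) is exactly what is needed: when $t = 3$ the unique $3$-path $\langle x, a, y\rangle$ is symmetric under swapping $x$ and $y$, so no contradiction arises — consistent with the fact that stars (which have many twins) are uniquely $C_3$-saturated.
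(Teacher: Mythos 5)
Your proof is correct and uses essentially the same idea as the paper: both construct a second $t$-path between the twins by exploiting the shared neighborhood (your ``swap the endpoints'' path is literally the same subgraph as the paper's ``reverse the central $(t-2)$-path''), and your appeal to saturation for the existence of $P$ subsumes the paper's separate degree-one case. No gaps.
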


\begin{proof}
Let $x$ and $y$ be twins in a graph $G$; note that twins are nonadjacent.
If $d(x)=d(y)=1$, then the only cycle completed by added $xy$ is a $3$-cycle,
so $G$ is not uniquely $C_t$-saturated.  If $d(x)=d(y)\ge 2$ and there is a
$t$-path $P$ with endpoints $x$ and $y$, then let $x'$ and $y'$ be the
neighbors of $x$ and $y$ on $P$, respectively.  Because $x$ and $y$ are twins,
$x'y,xy'\in E(G)$.  Reversing the central $(t-2)$-path of $P$ yields a second
$t$-path with endpoints $x$ and $y$ containing the edges $xy'$ and $x'y$.
Thus $G$ is not uniquely $C_t$-saturated.
\end{proof}

A {\it chordal path} of a cycle $C$ is a path of length at least $2$ whose
endpoints are in $C$ and whose internal vertices are not in $C$.

\begin{lemma}\label{lem:noevencyc}
For $t\ge 6$, every nontrivial uniquely $C_t$-saturated graph $G$ contains
an even cycle of length at most $2t-6$.
\end{lemma}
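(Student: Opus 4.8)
The plan is to reduce to the $2$-connected case, handle even girth directly, and then study the ``theta'' configuration formed by a shortest (odd) cycle together with a shortest chordal path.

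First, by Lemma~\ref{twoblock} the graph $G$ has a non-complete block $B$, which by Lemma~\ref{smallblock} is uniquely $C_t$-saturated; being non-complete, $B$ has at least $t$ vertices, so $B$ is a nontrivial $2$-connected uniquely $C_t$-saturated graph, and an even cycle of length at most $2t-6$ in $B$ lies in $G$. So we may assume $G$ is $2$-connected. If the girth $g$ of $G$ is even, then $g\le t+1$ by Lemma~\ref{girth}: for $t\ge 7$ this gives $g\le 2t-6$, and for $t=6$ an even integer at most $7$ is at most $6=2t-6$, so a shortest cycle already works. Hence assume $g$ is odd, so $3\le g\le t+1$.

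Fix a shortest cycle $C$; it is chordless, and $G\ne C$ (otherwise $G=C_g$ with $t\le g\le t+1$, but $C_t$ contains $C_t$, and in $C_{t+1}$ a pair of vertices at distance $3\le\FL{(t+1)/2}$ lies on no $t$-path). Since $G$ is $2$-connected, properly contains $C$, and $C$ is chordless, $G$ has a chordal path of $C$; choose one, $P=\langle p_0,\dots,p_\ell\rangle$, of minimum length $\ell$, with endpoints $u=p_0$ and $v=p_\ell$ on $C$. Let $A$ and $B$ be the two arcs of $C$ between $u$ and $v$, of lengths $a\le b$ with $a+b=g$; since $C$ is shortest, $\ell\ge b\ge a\ge 1$. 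The cycles $C$, $A\cup P$, $B\cup P$ have lengths $g$, $a+\ell$, $b+\ell$, summing to $2g+2\ell$; as $g$ is odd, exactly one of $a+\ell$ and $b+\ell$ is even. Let $C'$ be that cycle, of length $L\le b+\ell$. It suffices to prove $L\le 2t-6$.

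If $\ell\le t-1$ then $L\le(g-1)+(t-1)\le 2t-1$, so $L\le 2t-2$, and by Lemma~\ref{lem:C2t-2C2t-4} $L\notin\{2t-4,2t-2\}$; hence $L\le 2t-6$. In general, deleting one endpoint of the arc of $C$ not used by $C'$ leaves a path attached to $C'$ at a single vertex and otherwise disjoint from $C'$, of length $b-1$ if $C'=A\cup P$ and of length $a-1$ if $C'=B\cup P$. If $L<2t$ and this pendant path has length at least $t-\tfrac{L}{2}-1$, then truncating it exhibits $H_{L/2,\,t-L/2-1}$ as a subgraph of $G$, contradicting Lemma~\ref{cycpath} (when the pendant path has length $0$ this only asserts $L\ne 2t-2$, which also follows from Lemma~\ref{lem:C2t-2C2t-4}). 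Unwinding these inequalities together with Lemma~\ref{lem:C2t-2C2t-4} shows that $L<2t$ already forces $L\le 2t-6$. So it remains only to rule out $L\ge 2t$, and in that case $\ell\ge L-b\ge 2t-(g-1)\ge t$.

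This last case is the crux. With $\ell\ge t$, minimality of $P$ forbids chords of $P$ and edges from interior vertices of $P$ to $C$, so $p_0\not\sim p_{t-1}$ and the unique $t$-path joining $p_0$ and $p_{t-1}$ must be $\langle p_0,\dots,p_{t-1}\rangle$; routing an alternative path around $C$ through $A$ or $B$ yields a second such $t$-path only when $a+\ell=2t-2$ or $b+\ell=2t-2$, which is impossible once $L\ge 2t$. One therefore has to invoke genuinely that $G$ is saturated: since every nonadjacent pair lies on a $t$-path we have $\operatorname{diam}G\le t-1$, so two vertices $x,y$ of $C'$ at distance $t$ along $C'$ (which exist as $L\ge 2t$) are joined in $G$ by a path $S$ strictly shorter than either $C'$-arc between them; the two arcs have equal-parity lengths, so $S$ together with one of them forms a cycle whose parity one can influence by the choice of $x$ and $y$, and the aim is to force such a cycle to be even of length at most $2t-2$ --- hence at most $2t-6$ by Lemma~\ref{lem:C2t-2C2t-4}. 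I expect the genuine difficulty to lie precisely here: pushing the parity bookkeeping and the internal disjointness of $S$ through, and disposing of a small number of tight degenerate configurations (an arc of length $1$, or $L=2t$ with $\ell=t$), which I would treat by the local analysis of unique $t$-paths used in the preceding sections.
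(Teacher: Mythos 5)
Your reduction to the $2$-connected case, the even-girth case, and the branch $\ell\le t-1$ are all correct; in that branch the even cycle $C'$ has length $L\le 2t-2$, and excluding $2t-2$ and $2t-4$ via Lemma~\ref{lem:C2t-2C2t-4} gives $L\le 2t-6$. The genuine gap is exactly where you flag it: the case $L\ge 2t$ (equivalently $\ell\ge t$) is not proved, and the sketch you offer there does not close it. A shortest $x,y$-path $S$ for $x,y$ at distance $t$ along $C'$ need not be internally disjoint from $C'$; and even when it is, the two arcs of $C'$ between $x$ and $y$ have the \emph{same} parity, so both cycles $S\cup(\mathrm{arc})$ have length of the same parity as $|S|+t$ --- there is no choice available that forces an even cycle. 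Moving $x$ or $y$ by one step changes the arc parities but also changes $S$, whose parity you do not control. Deferring ``a small number of tight degenerate configurations'' to ``the local analysis of unique $t$-paths used in the preceding sections'' is not an argument, so the statement is not established.

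The way out is to never enter the regime $\ell\ge t$, and this is how the paper's proof differs from yours. Instead of taking a globally shortest chordal path of $C$, the paper extracts a chordal path from the \emph{unique $t$-path} joining two suitably chosen nonadjacent vertices $x,y$ of the short odd cycle $C$ itself: any chordal path contained in a $t$-path has length at most $t-1$, and gluing it to an arc of $C$ of the right parity (length at most $g-1\le t$) yields an even cycle of length at most $2t-2$, whence at most $2t-6$. In your framework this shows that for $g\ge5$ one in fact has $\ell\le t-1$ (pick $x,y$ nonconsecutive on $C$ with neither arc of length $t-1$; their unique $t$-path cannot stay on the chordless cycle $C$, so it contains a chordal path of length at most $t-1$). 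The case $g=3$ cannot be handled this way, since a triangle has no nonadjacent pair; the paper treats it separately by taking a neighbor $x'$ of a triangle vertex outside the triangle (so $x'$ is nonadjacent to another triangle vertex once $4$-cycles are excluded) and analyzing the unique $t$-path from $x'$ into the triangle, obtaining either a short even cycle or an odd cycle of length $2k+1$ with $2\le k\le\FL{t/2}$ that feeds back into the first case. You would need both ingredients to complete your argument.
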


\begin{proof}
By Lemma~\ref{girth}, $G$ has girth at most $t+1$.  By
Lemma~\ref{lem:C2t-2C2t-4}, $G$ does not contain $C_{2t-2}$ or $C_{2t-4}$.
Since $t+2\le 2t-2$, we may assume that the girth of $G$ is odd and that $G$
has no even cycle of length at most $2t-2$.

First suppose that $G$ contains a cycle $C$ of length $2k+1$ such that
$2\le k\le \FL{t/2}$.  The prohibition of short even cycles implies that $C$
has no chord.  Let $x$ and $y$ be nonconsecutive vertices in $C$ such that
$d_C(x,y)\neq t-1$; thus $x$ and $y$ are not adjacent.  A $t$-path with
endpoints $x$ and $y$ contains a chordal path of $C$ with length at most $t-1$.
Combining this chordal path with a path from $x$ to $y$ along $C$ yields an
even cycle with length at most $2t-2$ in $G$, a contradiction.

Now suppose that $G$ contains a $3$-cycle but no cycle of length $2k+1$ with
$2\le k\le\FL{t/2}$.  Let $[x,y,z]$ be a $3$-cycle $C$, and let $x$ be a vertex
of $C$ having a neighbor $x'\notin V(C)$.  Since by assumption $G$ has no
$4$-cycle, $x'y\notin E(G)$.  Hence $G$ has a $t$-path $P$ with endpoints $x'$
and $y$.  Now $P$ contains one of the following: a subpath of length at least
$3$ with endpoints $x'$ and $x$, a chordal path of length at least $2$
connecting two vertices in $V(C)$, or a path connecting $x'$ and a vertex in
$\{y,z\}$ with all internal vertices outside $C$.  In all cases, $G$ contains
an even cycle of length at most $t+2$ or an odd cycle of length $2k+1$ with
$2\le k\le\FL{t/2}$.
\end{proof}

In light of Lemma~\ref{lem:noevencyc} guaranteeing an even cycle of length
at most $2t-6$, an approach to proving Conjecture~\ref{conj:none} that there is
no uniquely $C_t$-saturated graph for $t\ge6$ is to prove that such a graph has
no such even cycle.  Although we cannot completely exclude $(2t-6)$-cycles
for all $t$, we can greatly restrict the graphs that contain them.  We state a
general lemma without proof, because we present the proof of
Conjecture~\ref{conj:none} only through $t=7$.  The ad hoc proof excluding
$8$-cycles when $t=7$ is shorter than the general proof of this lemma.

\begin{lemma}\label{2t-6}
For $t\ge7$, if a nontrivial uniquely $C_t$-saturated graph $G$ contains
a $(2t-6)$-cycle $C$ and $R=V(G)-V(C)$, then (1) $G[R]$ has no edges,
(2) every vertex of $R$ has exactly two neighbors on $C$, separated by odd
distance (at least $3$) along $C$, and (3) all vertices of $R$ have the same
distance along $C$ between their neighbors on $C$.  Also, (4) all chords of $C$
join vertices at even distance along $C$.
\end{lemma}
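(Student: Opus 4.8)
The plan is to fix a $(2t-6)$-cycle $C=[v_0,\ldots,v_{2t-7}]$ in $G$ (indices mod $2t-6$) and derive each conclusion by exhibiting a forbidden configuration — either a short even cycle (excluded by Lemmas~\ref{lem:C2t-2C2t-4} and, after the girth argument of Lemma~\ref{girth}, all even cycles up to $2t-2$), a short odd cycle of length $2k+1$ with $2\le k\le\lfloor t/2\rfloor$ (excluded via the chordal-path argument inside the proof of Lemma~\ref{lem:noevencyc}), a copy of some $H_{k,t-k-1}$ (excluded by Lemma~\ref{cycpath}), or a pair of nonadjacent vertices joined by two distinct $t$-paths (contradicting the Observation). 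For part (4): a chord $v_iv_j$ at odd distance $2\ell+1$ along $C$ splits $C$ into two even cycles of lengths $2\ell+2$ and $2t-6-2\ell$, at least one of which has length at most $t-2<2t-2$ (since $t\ge7$), a contradiction; so every chord joins vertices at even distance, proving (4).

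For parts (1)--(3) I would first pin down the neighborhoods of $R$-vertices on $C$. Let $r\in R$. It has at least one neighbor on $C$ (else, since $G$ is $2$-connected by Corollary~\ref{2conn} and Lemma~\ref{twoblock}, one still reaches $C$ through other $R$-vertices, but I handle connectivity directly). Take nonadjacent $x,y$; the unique $t$-path $P$ between them, together with a suitable arc of $C$, must be controlled. Concretely: if $r$ had two neighbors $v_i,v_j$ on $C$ at even distance $2a$ along $C$, the two $r$--$C$--$r$ closed walks give even cycles of lengths $2a+2$ and $2t-4-2a$; one of these is at most $t-1$, a contradiction; hence any two neighbors of $r$ on $C$ are at odd distance, and since the distances along $C$ from $v_i$ around both ways sum to $2t-6$, an $R$-vertex with three neighbors on $C$ would force two of them at even distance. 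So each $r\in R$ has exactly one or exactly two neighbors on $C$, and if two, they are at odd distance $\ge 3$ (distance $1$ is impossible since then $r$ plus that edge is a triangle using a chord-free cycle edge, giving a $3$-cycle, i.e.\ $k=1$, which is fine — but then $r,v_{i+1}$-type arguments produce an even cycle; this needs a short check). The case of an $R$-vertex $r$ with a single neighbor $v_i$ on $C$: then $r$ is a cut-vertex separating the rest of $R$ from $C$, or $r$ has a neighbor $r'\in R$; chase the unique $t$-path from $r$ to a far vertex of $C$ to produce either $H_{k,t-k-1}$ or a short cycle. This forces (2). Then (1): an edge $rr'$ in $G[R]$ together with their neighborhoods on $C$ and an arc of $C$ builds a cycle whose parity and length I would bound to land in the forbidden range — this is where I expect to split into a few subcases according to how the four (or fewer) attachment points interlace on $C$. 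Finally (3): if $r$ attaches at $C$-distance $2a+1$ and $r'$ at $C$-distance $2b+1$ with $a\ne b$, then (using (1), so $rr'\notin E(G)$) I would route a $t$-path through $r$ and a $t$-path through $r'$, or combine the two chordal paths $r$ and $r'$ determine on $C$, to get an even cycle of length roughly $(2a+2)+(2b+2)$ or to get two $t$-paths between the same pair of nonadjacent vertices; a careful choice of which arcs of $C$ to use should bring the length into $\{$even $\le 2t-2\}$ or reproduce a nonunique $t$-path.

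The main obstacle will be part (3), and secondarily the bookkeeping in part (1): once we know all $R$-vertices attach at odd $C$-distance, the lengths of the cycles we can build are sums of an even "jump" $2a+2$ and an arc of $C$ of controllable length, but $2t-6$ is large enough that naively closing up can overshoot $2t-2$, so the argument must carefully exploit the freedom to go around $C$ either way and to land the total in the narrow forbidden window — or else, when no short even cycle is available, to instead produce $H_{k,t-k-1}$ with $k\le t-1$ (permissible since the pendant path can be taken inside $R$ or along $C$) or a duplicated $t$-path. I would organize parts (1)--(3) so that (2) is proved first and used freely, then (1), then (3), at each stage reducing to a finite case analysis on the cyclic positions of at most four attachment points, and in every case citing exactly one of Lemma~\ref{cycpath}, Lemma~\ref{lem:C2t-2C2t-4}, the odd-cycle exclusion, or the Observation.
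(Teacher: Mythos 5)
The paper deliberately states this lemma without proof (``we state a general lemma without proof\ldots''), so there is no proof of record to compare against; I am judging your proposal on its own terms. As such it is a plan rather than a proof --- you yourself flag parts (1), (2), and especially (3) as requiring case analyses you have not carried out --- but beyond incompleteness there is a concrete logical error in the one tool you treat as settled. You repeatedly cite as a ``contradiction'' the mere presence of an even cycle of length at most $2t-2$ (and, similarly, of an odd cycle of length $2k+1$ with $2\le k\le\lfloor t/2\rfloor$). No such exclusion exists: Lemma~\ref{lem:C2t-2C2t-4} forbids only $C_{2t-2}$ and $C_{2t-4}$; the hypotheses ``no even cycle of length at most $2t-2$'' and ``no such odd cycle'' occur in the proof of Lemma~\ref{lem:noevencyc} only as assumptions for contradiction; and that lemma in fact concludes that an even cycle of length at most $2t-6$ \emph{must} exist --- the very hypothesis of the statement you are proving. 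Consequently your argument for part (4) (a chord at odd distance yields an even cycle of length at most $t-2$, ``a contradiction'') and for the odd-distance claim in part (2) (two neighbors of $r\in R$ at even distance yield an even cycle of length at most $t-1$, ``a contradiction'') establish nothing as written. Both are repairable: in each case the short cycle you build lies on $C$, and the remaining arc of $C$ supplies a pendant path long enough (a length check using $t\ge7$ is needed) to produce $H_{k,t-k-1}$, which Lemma~\ref{cycpath} genuinely forbids. That repair is the actual engine of the lemma and is absent from your write-up.

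The portions you defer are also not routine. Ruling out an $R$-vertex attached to two \emph{consecutive} vertices of $C$ (your parenthetical ``this needs a short check'') cannot be done by a purely local forbidden-subgraph argument: in Case 1 of Theorem~\ref{thm:C7} (exactly the case $2k=2t-6$ with $t=7$) vertices of $R$ attached to consecutive pairs survive every such test and are eliminated only by combining two of them to force $C_{2t-4}$ or twins (Lemma~\ref{twins}); indeed for $t=7$ the lemma is vacuous only because the whole graph is then killed. Part (3) has no argument beyond ``a careful choice of arcs should work,'' and part (1), though you defer it, is actually the easiest piece: an edge of $G[R]$ plus connectivity gives a pendant path of length $2$ on the $(2t-6)$-cycle, i.e.\ $H_{t-3,2}$, directly forbidden by Lemma~\ref{cycpath}. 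In short, your decomposition by attachment patterns is sensible and consistent with the style of the proofs of Theorems~\ref{thm:C6} and~\ref{thm:C7}, but the justifications you do supply rest on a nonexistent blanket exclusion of short cycles, and the genuinely hard parts of (2) and all of (3) remain unproved.
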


\section{Uniquely $C_{5}$-saturated graphs}\label{Sec:C_5}

As mentioned in the introduction, the Friendship Theorem of
Erd\H{o}s, R\'enyi, and S\' os~\cite{ERS} states that if every two vertices
in a graph have exactly one common neighbor, then some vertex is adjacent
to all others.  As they noted, this immediately implies that the graph
consists of edge-disjoint triangle with one common vertex.

In such a graph, there is only one type of missing edge, joining two of the
triangles.  Adding this to two edges from each of the two triangles completes a
unique $5$-cycle.  Hence friendship graphs are uniquely $C_5$-saturated. 

\begin{theorem}\label{them:C5}
A graph is a nontrivial uniquely $C_5$-saturated graph if and only if it is a
friendship graph with at least five vertices.
\end{theorem}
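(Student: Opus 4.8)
The plan is to prove both directions of the equivalence, with the reverse direction (friendship graphs are uniquely $C_5$-saturated) already handled in the remarks preceding the statement, so the real content is the forward direction: every nontrivial uniquely $C_5$-saturated graph is a friendship graph. I would do this by invoking the structural lemmas of Section~\ref{Sec:Lemmas} to show that such a graph has the property that every two vertices have exactly one common neighbor, and then apply the Friendship Theorem of Erd\H{o}s, R\'enyi, and S\'os~\cite{ERS} to conclude. So the heart of the argument is: let $G$ be a nontrivial uniquely $C_5$-saturated graph and show that any two distinct vertices $x,y$ have exactly one common neighbor.

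First I would dispose of adjacent pairs. If $x$ and $y$ are adjacent, I want to show they have exactly one common neighbor. Since $G$ is nontrivial ($|V(G)|\ge 5$) and $C_5$-free, $G$ is not complete, so there is a nonadjacent pair somewhere; the unique $5$-path joining that pair, together with the added edge, is the unique $C_5$ created, and this local structure forces triangles. More directly: I would use Lemma~\ref{completeblock} (complete blocks have at most three vertices) and Lemma~\ref{smallblock} to argue that the blocks of $G$ are either triangles or non-complete blocks on at least $5$ vertices, and then show a non-complete block on at least $5$ vertices cannot occur in the $C_5$ case — any such block, being $2$-connected and $C_5$-free with at least $5$ vertices, would have to contain a nonadjacent pair whose unique $5$-path lies in the block, and chasing chords of that $5$-path (as in the proofs of Lemmas~\ref{completeblock} and~\ref{lem:C2t-2C2t-4}) produces either a second $5$-path or a copy of $C_5$ already present. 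Hence every block of $G$ is a triangle or a single edge (a $K_2$ block), and every edge of $G$ lies in exactly one block.

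Next I would rule out $K_2$ blocks. Suppose $uv$ is a cut-edge. Take a vertex $w$ in another block at $u$ and a vertex $z$ on the other side: if the graph is large enough there is a nonadjacent pair straddling the cut-edge $uv$ at distance exceeding $4$, contradicting Lemma~\ref{cycpath} (no $H_{k,4-k}$, i.e. no path-with-short-cycle configuration that would force two $5$-paths between far vertices); the small remaining cases are finite and checked directly to show no $K_2$ block survives while keeping $|V(G)|\ge 5$. Therefore every block of $G$ is a triangle, and — since blocks pairwise share at most one vertex and no two triangles can share an edge in a $C_5$-free graph (two triangles on a common edge already contain... actually on a common edge they have $4$ vertices and no $C_5$, but then the two apex vertices are nonadjacent twins, contradicting Lemma~\ref{twins}) — the block-cut tree shows $G$ is a union of edge-disjoint triangles glued along cut-vertices. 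Now I must show all these triangles share one common vertex: if two triangles met at a cut-vertex $c$ and a third triangle were attached at a non-central vertex, one finds two nonadjacent vertices at distance too large, or a forbidden $H_{k,\ell}$, giving a second $5$-path or a $C_5$; equivalently, one checks directly that any two vertices then fail to have exactly one common neighbor. This forces the friendship configuration.

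The main obstacle I anticipate is the step showing that no non-complete $2$-connected block on at least $5$ vertices can occur — it requires a careful chord-chasing argument on the unique $5$-path (much like Lemma~\ref{lem:C2t-2C2t-4}) to rule out every way a $2$-connected $C_5$-free graph on $\ge 5$ vertices could be uniquely $C_5$-saturated, since unlike the $t=6,7$ cases there is no slack from Lemma~\ref{lem:noevencyc}. A cleaner alternative, which I would pursue if the block analysis gets unwieldy, is to argue the common-neighbor property directly and globally: for nonadjacent $x,y$, the unique $5$-path $\langle x,a,b,c,y\rangle$ gives common neighbors of the pairs $\{x,b\}$, $\{a,c\}$, $\{b,y\}$; uniqueness of the $5$-path forbids a second common neighbor for any pair $\{x,y\}$ at distance exactly $2$ (a second common neighbor $z'$ would let us reroute the $5$-path), and then a short case analysis (distance $1$, distance $2$, distance $\ge 3$) establishes "exactly one common neighbor" in all cases, at which point \cite{ERS} finishes the proof. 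Either route reduces the theorem to the Friendship Theorem; I would present whichever yields the shorter write-up.
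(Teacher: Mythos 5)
Your overall strategy---establish that every two vertices of $G$ have exactly one common neighbor and then invoke the Friendship Theorem of Erd\H{o}s, R\'enyi, and S\'os---is a genuinely different route from the paper, which never proves the common-neighbor property directly but instead builds the windmill structure from forbidden subgraphs ($K_{2,3}$, $K_4$ plus a pendant edge, and the $5$-vertex friendship graph plus a pendant edge) in two cases according to whether a $4$-cycle is present. However, as written your sketch has gaps at exactly the steps where the real work lies. First, your claim that uniqueness of the $5$-path ``forbids a second common neighbor for any pair at distance exactly $2$ (a second common neighbor $z'$ would let us reroute the $5$-path)'' is not justified: two common neighbors of a nonadjacent pair $x,y$ produce a $4$-cycle $[x,z,y,z']$, not two $5$-paths with endpoints $x$ and $y$, and the unique $5$-path joining $x$ and $y$ need not pass through $z$ or $z'$ at all, so there is nothing to reroute. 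Excluding $4$-cycles is the substantive Case 1 of the paper's proof and requires the forbidden subgraphs $K_{2,3}$ and $F$ together with Lemma~\ref{smallblock}; it cannot be waved away. Second, the friendship property has a lower-bound half that you never address: every two vertices must have \emph{at least} one common neighbor, i.e., $G$ must have diameter $2$ and every edge must lie in a triangle. You name the ``distance $\ge 3$'' case but give no argument for why such pairs cannot occur (a priori a nonadjacent pair could be joined by a $5$-path while lying at distance $3$ or $4$), and you give no argument that an edge cannot lie in zero triangles.

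The block-decomposition route has the same character: you explicitly defer the key step (no non-complete block, then no $K_2$ block) to unspecified ``chord-chasing,'' and the parenthetical that two triangles sharing an edge yield nonadjacent twins is wrong in general, since the two apex vertices may have further neighbors (what they do yield is a $4$-cycle, returning you to the unresolved point above). Either of your routes could in principle be completed, but each would end up reproducing essentially the content of the paper's two cases; in the present form the proposal identifies the right target but does not supply the arguments that make the theorem true.
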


\begin{proof}
We have noted that the condition is sufficient.  For the converse, let $G$ be a
nontrivial uniquely $C_5$-saturated graph.  Our proof depends on five graphs
that cannot be subgraphs of $G$.  Already $H_{2,2}$ and $H_{3,1}$ are
excluded by Lemma~\ref{cycpath}.  Let $F$ consist of $K_4$ plus a pendant edge
at one vertex, and let $F'$ consist of the $5$-vertex friendship graph plus a
pendant edge at a vertex of degree $2$.  Figure~\ref{fig:C5forb} illustrates
that $F$, $F'$, and the complete bipartite graph $K_{2,3}$ are all forbidden
as subgraphs of $G$, since each has a nonadjacent vertex pair that when added
completes at least two $5$-cycles.

\begin{figure}[h]
\centering
\begin{tikzpicture}

\fill (5,.5) circle (4pt);
\fill (5.5,1) circle (4pt);
\fill (5.5,0) circle (4pt);
\fill (6,.5) circle (4pt);
\fill (7,.5) circle (4pt);
\draw[line width=2pt] (5,.5)--(5.5,0);
\draw[line width=2pt] (5,.5)--(5.5,1);
\draw[line width=2pt] (6,.5)--(5.5,0);
\draw[line width=2pt] (6,.5)--(5.5,1);
\draw[line width=2pt] (7,.5)--(5.5,0);
\draw[line width=2pt] (7,.5)--(5.5,1);
\draw[dotted, line width=2pt] (5,.5)--(6,.5);
\node at (6,-.6) {$K_{2,3}$};
\fill (9,0) circle (4pt);
\fill (9,1) circle (4pt);
\fill (10,1) circle (4pt);
\fill (10,0) circle (4pt);
\fill (11,0) circle (4pt);
\draw[line width=2pt] (9,0)--(9,1);
\draw[line width=2pt] (9,0)--(10,0);
\draw[line width=2pt] (9,0)--(10,1);
\draw[line width=2pt] (9,1)--(10,0);
\draw[line width=2pt] (9,1)--(10,1);
\draw[line width=2pt] (10,0)--(10,1);
\draw[line width=2pt] (10,0)--(11,0);
\draw[dotted, line width=2pt] (11,0) to [bend right=30] (10,1);
\node at (10,-.6) {$F$};

\fill (13,0) circle (4pt);
\fill (13,1) circle (4pt);
\fill (14,.5) circle (4pt);
\fill (15,0) circle (4pt);
\fill (15,1) circle (4pt);
\fill (16,1) circle (4pt);
\draw[line width=2pt] (13,0)--(13,1);
\draw[line width=2pt] (13,0)--(14,.5);
\draw[line width=2pt] (13,1)--(14,.5);
\draw[line width=2pt] (14,.5)--(15,0);
\draw[line width=2pt] (14,.5)--(15,1);
\draw[line width=2pt] (15,0)--(15,1);
\draw[line width=2pt] (15,1)--(16,1);
\draw[dotted, line width=2pt] (16,1) to [bend right=35] (13,1);
\node at (14.5,-.6) {$F'$};
\end{tikzpicture}
\caption{\label{fig:C5forb}Three graphs forbidden as subgraphs of uniquely
$C_5$-saturated graphs.}
\end{figure}
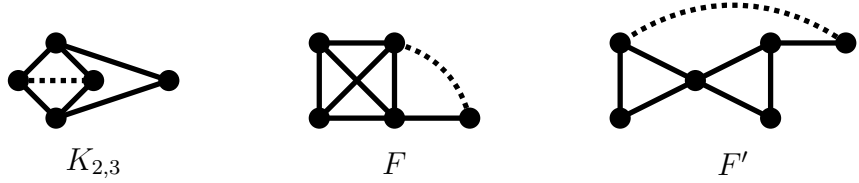

By Lemma~\ref{girth}, $G$ has girth at most $6$; by Lemma~\ref{lem:C2t-2C2t-4},
$G$ has no $6$-cycle.  By definition, $G$ has no $5$-cycle.

Suppose first that $G$ has a $4$-cycle; let $S$ be its vertex set.  Let
$R=V(G)-S$.  Because $G$ is connected and $H_{2,2}\not\esub G$, each vertex in
$R$ has a neighbor in $S$, and $R$ is an independent set.  Because
$C_5,K_{2,3}\not\esub G$, each vertex in $R$ has exactly one neighbor in $S$.
Therefore $S$ is the vertex set of a block in $G$, and by
Lemma~\ref{smallblock} $S$ is a clique.  Since $F\not\esub G$, we conclude that
$R= \emptyset$ and $G=K_{4}$.  Thus no nontrivial uniquely $C_5$-saturated
graph has a $4$-cycle.

We conclude that $G$ has no $4$-cycle but has a $3$-cycle, say $[x,y,z]$.
Since $G$ is nontrivial and connected, we may assume by symmetry that $x$ has a
neighbor $x'$ not in $\{y,z\}$.  Since $G$ has no $4$-cycle, $y$ and $z$ are
not adjacent to $x'$.  Since $G$ has no $4$-cycle or $6$-cycle, the unique
$5$-path $P$ with endpoints $x'$ and $y$ contains $x$.  It must be
$\langle x',w,x,z,y\rangle$, where $w$ is a common neighbor of $x'$ and $x$.
Since $F'\not\esub G$, we conclude that $y$, $z$, $x'$, and $w$ have no
other neighbors in $G$.  Repeating the argument shows that $x$ is a dominating
vertex and $G-x$ is a disjoint union of copies of $K_2$, so $G$ is a friendship
graph with at least five vertices.
\end{proof}

The case in Theorem~\ref{them:C5} where $G$ has no $4$-cycle shows why the
proof of Lemma~\ref{lem:noevencyc} is not valid for $t=5$.  The common neighbor
of $x'$ and $x$ yields the $5$-path with endpoints $x'$ and $y$
without creating a $4$-cycle.

\section{Uniquely $C_{6}$- and $C_7$-saturated graphs}\label{Sec:C_6C_7}

In this section, we prove that there are no nontrivial uniquely
$C_{6}$-saturated or uniquely $C_7$-saturated graphs.  Our proofs depend on
successively forbidding cycles of various lengths.

\begin{theorem}\label{thm:C6}
There are no nontrivial uniquely $C_6$-saturated graphs.
\end{theorem}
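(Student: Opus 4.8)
The plan is to mimic the structure of the $C_5$ argument: use the structural lemmas to pin down girth, forbid short cycles, and then handle each possible girth in turn, driving every case to a contradiction. By Corollary~\ref{2conn} it suffices to treat a $2$-connected nontrivial uniquely $C_6$-saturated graph $G$ (for $t=6$, Lemma~\ref{twoblock} applies). By Lemma~\ref{girth}, $G$ has girth at most $7$; by Lemma~\ref{lem:C2t-2C2t-4}, $G$ has no $C_{10}$ or $C_8$; by definition $G$ has no $C_6$. By Lemma~\ref{twins}, $G$ has no twins, and by Lemma~\ref{cycpath}, $G$ contains no $H_{k,5-k}$ for $k<6$, i.e.\ no $H_{2,3}$, $H_{3,2}$, $H_{4,1}$, $H_{5,0}=C_{10}$. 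Finally, Lemma~\ref{lem:noevencyc} guarantees an even cycle of length at most $2t-6=6$; since $G$ has no $C_6$ and $C_4\subseteq C_6$-free is the only remaining possibility among even cycles of length $\le 6$, \emph{$G$ must contain a $4$-cycle}. So the whole proof reduces to analyzing a $2$-connected $C_6$-saturated graph containing a $4$-cycle and no $C_6$, $C_8$, $C_{10}$.

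So fix a $4$-cycle with vertex set $S$, let $R=V(G)\setminus S$, and study how $R$ attaches to $S$. As in the $C_5$ case, I would first show each vertex of $R$ has few neighbors in $S$: because $G$ is $2$-connected and contains no $H_{2,3}$ (a $4$-cycle with a pendant $3$-path) and no $H_{2,2}$-type configuration, a vertex of $R$ cannot sit at the far end of a long pendant path, and because $K_{2,3}$-like subgraphs together with the no-$C_4$-chord constraints would create multiple $6$-paths between a suitable nonadjacent pair, each vertex of $R$ has exactly one or two neighbors in $S$. The no-twins lemma kills the case of two vertices of $R$ attached identically. Then I would bound $|R|$: a vertex $r\in R$ with a single neighbor $s\in S$ forces, via the unique $6$-path between $r$ and the antipode of $s$ in the $4$-cycle, a very rigid local picture (the $6$-path must wrap around the $4$-cycle and pick up exactly one extra vertex), and this extra vertex, combined with $S$, typically produces either a forbidden $H_{k,\ell}$ or a second $6$-cycle when the saturating edge at another nonadjacent pair is added. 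Iterating the ``repeat the argument'' idea from the $C_5$ proof should force $G$ to be one of a handful of small explicit graphs, each of which I then check directly fails to be uniquely $C_6$-saturated (e.g.\ has a nonadjacent pair completing zero or two $6$-cycles).

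The main obstacle, I expect, is the case analysis of exactly how the $4$-cycle's vertices extend: for $t=6$ a $6$-path between two vertices of the $4$-cycle only needs two extra vertices, so there is more freedom than in the $C_5$ setting, and several configurations (a $4$-cycle with two vertices of degree $\ge3$, various ways of gluing a second short even structure) must each be excluded. The key auxiliary facts I would lean on are: (i) no $C_8$ and no $C_{10}$, which forbid ``one small cycle plus a chordal path'' from producing a medium even cycle; (ii) the $H_{k,\ell}$ list, which forbids ``small cycle plus pendant path''; and (iii) uniqueness of $6$-paths, which is what converts any symmetric configuration (two parallel routes between a nonadjacent pair) into an immediate contradiction. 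I anticipate needing to introduce a couple more small forbidden subgraphs analogous to the $F$, $F'$ of the $C_5$ proof — a $K_4$ (or $4$-cycle) with a short pendant path, or a $4$-cycle sharing an edge with a triangle plus a pendant edge — verified by exhibiting a nonadjacent pair that completes $\ge2$ copies of $C_6$, and then the argument closes by the same ``propagate the rigidity around the graph'' induction used for $C_5$.
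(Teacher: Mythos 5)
Your setup is exactly the paper's: reduce to the $2$-connected case via Corollary~\ref{2conn}, exclude $C_6$, $C_8$, $C_{10}$, invoke Lemma~\ref{lem:noevencyc} to force a $4$-cycle $S$, and then analyze how $R=V(G)\setminus S$ attaches. But from that point on the proposal is a plan rather than a proof, and the plan as stated has a genuine gap: the entire decisive case analysis is deferred with phrases like ``should force,'' ``typically produces,'' and ``I anticipate needing.'' Concretely, three things must actually be carried out. First, $R$ must be shown independent: a $3$-vertex path in $G[R]$ gives $H_{2,3}$ via two disjoint paths to $S$ (from $2$-connectivity), and a single edge $uv$ in $G[R]$ forces, again by $2$-connectivity, a path $\langle x,u,v,y\rangle$ with $x,y\in S$ distinct; since $C_6$ is forbidden this pins down a specific $6$-vertex subgraph (the $4$-cycle plus a chordal path of length $3$ between opposite vertices) that has two distinct $6$-paths between the remaining opposite pair. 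You gesture at ``introducing a couple more small forbidden subgraphs,'' but identifying and verifying exactly these configurations is the content of the proof, not a detail.

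Second, once $R$ is independent, $2$-connectivity forces \emph{every} vertex of $R$ to have at least two neighbors in $S$; your contemplated case of ``a vertex $r\in R$ with a single neighbor $s\in S$'' cannot occur and the ensuing discussion of the $6$-path to the antipode of $s$ is analyzing a situation that the hypotheses already exclude. What actually needs checking is that two vertices of $R$ whose neighbor-pairs in $S$ are not identical always create a $C_6$ or two $6$-paths between a nonadjacent pair (a short but unavoidable case check over the possible pairs of $2$-subsets of $S$), whence any two vertices of $R$ are twins, forbidden by Lemma~\ref{twins}. Third, this yields $|R|\le 1$, hence $|V(G)|\le 5$ and $G\in\{K_4,K_5\}$, which are trivial; there is no ``handful of small explicit graphs'' to check beyond this, and no ``propagate the rigidity'' induction as in the $C_5$ proof is needed. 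So the skeleton is right, but the proof is not yet there: the forbidden-subgraph verifications and the neighbor-pair case analysis must be written out explicitly.
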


\begin{proof}
Let $G$ be a uniquely $C_6$-saturated graph.  By Corollary~\ref{2conn}, we may
assume that $G$ is $2$-connected.
By Lemma~\ref{lem:C2t-2C2t-4}, $G$ does not contain $C_{10}$ or $C_8$.
By Lemma~\ref{lem:noevencyc}, $G$ contains an even cycle of length at most $6$.
By definition, $G$ does not contain $C_6$.  Hence $G$ contains $C_4$.
Let $S$ be the vertex set of a $4$-cycle in $G$, and let $R=V(G)-S$.

First suppose that $G[R]$ contains a 3-path $\langle u,z,v\rangle$.  Since $G$
is $2$-connected, two disjoint paths connect $\{u,z,v\}$ to $S$.  Choosing
shortest such paths, one has $u$ or $v$ as an endpoint, yielding
$H_{2,3}\esub G$.  This contradicts Lemma~\ref{cycpath}; we conclude 
$\Delta(G[R])\le 1$.

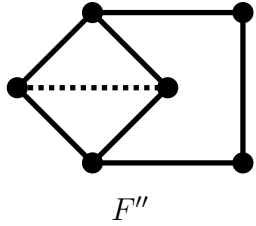
\begin{figure}[htb]
 \begin{center}
 \begin{tikzpicture}
\fill (0,1) circle (4pt);
\fill (1,2) circle (4pt);
\fill (1,0) circle (4pt);
\fill (2,1) circle (4pt);
\fill (3,2) circle (4pt);
\fill (3,0) circle (4pt);
\draw[line width=2pt] (0,1)--(1,2);
\draw[line width=2pt] (0,1)--(1,0);
\draw[line width=2pt] (2,1)--(1,2);
\draw[line width=2pt] (2,1)--(1,0);
\draw[line width=2pt] (1,2)--(3,2);
\draw[line width=2pt] (3,2)--(3,0);
\draw[line width=2pt] (1,0)--(3,0);
\draw[dotted, line width=2pt] (0,1) to  (2,1);
\node at (1.5,-.6) {$F''$};
\end{tikzpicture}
 \end{center}
\vspace{-1pc}
\caption{A forbidden subgraph for uniquely $C_{6}$-saturated graphs.\label{c6forb1}}
\end{figure}

Suppose $uv\in E(G[R])$.  Since $G$ is $2$-connected and the only edges leaving
$\{u,v\}$ go to $S$, there are distinct vertices $x,y\in S$ such that
$\langle x,u,v,y\rangle$ is a $4$-path.  Since $G$ cannot contain $C_6$, it
contains the graph $F''$ in Figure~\ref{c6forb1}.  Since $F''$ has two
nonadjacent vertices connected by more than one $6$-path, $G$ is not uniquely
$C_6$-saturated.

We may therefore assume that $R$ is an independent set.  Since $G$ is
$2$-connected, each vertex in $R$ has at least two neighbors in $S$.
If $|V(G)|\ge 6$, then let $u$ and $v$ be vertices in $R$.  If two neighbors
of each can be chosen in $S$ that are not the same pair, then $G[S\cup\{u,v\}]$
contains a $6$-cycle or two $6$-paths with endpoints $u$ and $v$, as shown in
Figure~\ref{c6forb2}.  Hence $u$ and $v$ have degree $2$ and have the same two
neighbors in $S$.  This makes them twins, which is forbidden by
Lemma~\ref{twins}.  We conclude that $G$ contains at most five vertices, which
yields $G\in \{K_4,K_5\}$.  We conclude that there are no nontrivial uniquely
$C_6$-saturated graphs.
\end{proof}

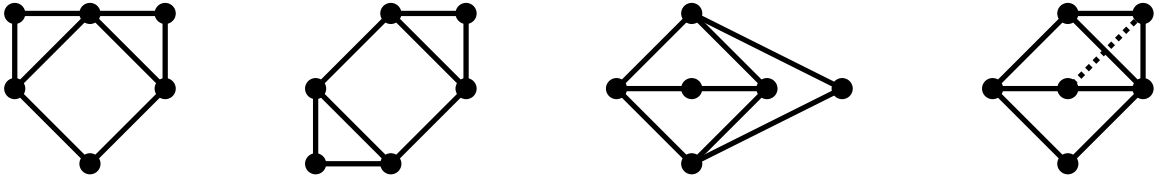
\begin{figure}[htb]
 \begin{center}
 \begin{tikzpicture}
\fill (0,1) circle (4pt);
\fill (1,2) circle (4pt);
\fill (1,0) circle (4pt);
\fill (2,1) circle (4pt);
\fill (0,2) circle (4pt);
\fill (2,2) circle (4pt);
\draw[line width=2pt] (0,1)--(1,2);
\draw[line width=2pt] (0,1)--(1,0);
\draw[line width=2pt] (2,1)--(1,2);
\draw[line width=2pt] (2,1)--(1,0);
\draw[line width=2pt] (0,1)--(0,2);
\draw[line width=2pt] (1,2)--(0,2);
\draw[line width=2pt] (1,2)--(2,2);
\draw[line width=2pt] (2,1)--(2,2);
\fill (8,1) circle (4pt);
\fill (9,2) circle (4pt);
\fill (9,0) circle (4pt);
\fill (10,1) circle (4pt);
\fill (9,1) circle (4pt);
\fill (11,1) circle (4pt);
\draw[line width=2pt] (8,1)--(9,2);
\draw[line width=2pt] (8,1)--(9,0);
\draw[line width=2pt] (10,1)--(9,2);
\draw[line width=2pt] (10,1)--(9,0);
\draw[line width=2pt] (8,1)--(9,1);
\draw[line width=2pt] (9,1)--(10,1);
\draw[line width=2pt] (11,1)--(9,2);
\draw[line width=2pt] (11,1)--(9,0);
\fill (4,1) circle (4pt);
\fill (5,2) circle (4pt);
\fill (5,0) circle (4pt);
\fill (6,1) circle (4pt);
\fill (4,0) circle (4pt);
\fill (6,2) circle (4pt);
\draw[line width=2pt] (4,1)--(5,2);
\draw[line width=2pt] (4,1)--(5,0);
\draw[line width=2pt] (6,1)--(5,2);
\draw[line width=2pt] (6,1)--(5,0);
\draw[line width=2pt] (4,1)--(4,0);
\draw[line width=2pt] (5,0)--(4,0);
\draw[line width=2pt] (5,2)--(6,2);
\draw[line width=2pt] (6,1)--(6,2);
\fill (13,1) circle (4pt);
\fill (14,2) circle (4pt);
\fill (14,0) circle (4pt);
\fill (15,1) circle (4pt);
\fill (14,1) circle (4pt);
\fill (15,2) circle (4pt);
\draw[line width=2pt] (13,1)--(14,2);
\draw[line width=2pt] (13,1)--(14,0);
\draw[line width=2pt] (15,1)--(14,2);
\draw[line width=2pt] (15,1)--(14,0);
\draw[line width=2pt] (13,1)--(14,1);
\draw[line width=2pt] (14,1)--(15,1);
\draw[line width=2pt] (15,1)--(15,2);
\draw[line width=2pt] (14,2)--(15,2);
\draw[dotted, line width=2pt] (15,2) to  (14,1);
\end{tikzpicture}
 \end{center}
\vspace{-1pc}
\caption{Forbidden subgraphs for uniquely $C_6$-saturated graphs.\label{c6forb2}}
\end{figure}

The method for $C_7$ is similar.

\begin{theorem}\label{thm:C7}
There are no nontrivial uniquely $C_{7}$-saturated graphs.
\end{theorem}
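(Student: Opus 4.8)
The plan is to mimic the structure of the proof of Theorem~\ref{thm:C6}, using the same battery of structural lemmas but tracking the (necessarily larger) list of forbidden configurations. First I would invoke Corollary~\ref{2conn} to reduce to the case that $G$ is $2$-connected. Then Lemma~\ref{lem:C2t-2C2t-4} forbids $C_{12}$ and $C_{10}$, and Lemma~\ref{lem:noevencyc} guarantees an even cycle of length at most $2t-6 = 8$; since $C_6$ is absent only by Lemma~\ref{lem:C2t-2C2t-4}'s sibling — wait, $C_6$ is not forbidden directly, so the even cycle of length $\le 8$ is either a $C_4$, a $C_6$, or a $C_8$. I would handle these three cases, ideally showing that the presence of a short even cycle in a $2$-connected uniquely $C_7$-saturated graph forces $|V(G)|$ to be small, and then finishing by checking the finitely many small cases by hand (or citing that they reduce to complete graphs $K_m$ with $m<7$, which are trivial).

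The core of each case is the same: fix the vertex set $S$ of the short even cycle, let $R = V(G)-S$, and constrain how vertices of $R$ attach to $S$. The tools are: Lemma~\ref{cycpath} (no $H_{k,6-k}$, i.e.\ the pendant-path configurations $H_{2,4}$, $H_{3,3}$, $H_{4,2}$ shown in Figure~\ref{fig:Hkl}, as well as $H_{1,5}$), Lemma~\ref{twins} (no twins), $2$-connectivity (forcing each component of $R$ to send at least two disjoint paths to $S$), and the nonexistence of $C_7$, $C_{10}$, $C_{12}$. As in the $C_6$ proof, I would first bound $\Delta(G[R])$ and the structure of components of $G[R]$: a long path or a large structure in $R$, combined with two disjoint paths to $S$ guaranteed by $2$-connectivity, produces a forbidden $H_{k,\ell}$ or a $C_7$ or an extra $7$-path between two nonadjacent vertices. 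Then, with $G[R]$ reduced to isolated vertices (or near that), I would show two vertices of $R$ must be twins — or create a $7$-cycle or a second $7$-path — unless $|V(G)|$ is below some small threshold. Concretely I expect, paralleling the $C_6$ argument, that in the $C_4$ case one shows $R=\varnothing$ or $G$ is one of a few small graphs; in the $C_6$ and $C_8$ cases one similarly pins down $R$ and its attachment pattern, then derives a contradiction from twins or from a forbidden subgraph. The paper's remark that it "states a general lemma without proof" (Lemma~\ref{2t-6}) but does the $C_7$ case ad hoc signals that the $C_8$ subcase is the delicate one and is shorter to do directly than via the general Lemma~\ref{2t-6}; so I would spend most effort enumerating the ways $R$-vertices can have two neighbors on an $8$-cycle without creating a short odd/even cycle, a $C_7$, or a repeated $7$-path, and show each either forces twins or forces $|V(G)| \le$ (small).

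The main obstacle I anticipate is the $C_8$ case ($= 2t-6$ for $t=7$). Here the relevant even cycle has length one less than $2t-4 = 10$ and two less than $2t-2 = 12$ (both forbidden), so chords and $R$-attachments are tightly but not trivially constrained: a chord of $C_8$ splits it into cycles of lengths summing to $10$, and one must rule out each split ($2{+}8$, $4{+}6$, $6{+}4$, etc., as well as paths through $R$) by producing either a forbidden $H_{k,\ell}$, a $C_7$, or a nonunique $7$-path — and an $R$-vertex with two neighbors on $C_8$ at odd distance $3$ creates a $C_7$ directly while at distance $5$ it interacts with the rest. Keeping the bookkeeping of "which pairs of $S$-vertices, at which cyclic distance, can serve as the two attachment points of an $R$-vertex" manageable is the crux; I would organize it by the cyclic distance between the two attachment points and, wherever two $R$-vertices coexist, argue they are forced into being twins (contradicting Lemma~\ref{twins}) unless the graph is already too small to be nontrivial. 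Once $|V(G)| < 2t = 14$ is forced in every branch, the leftover finite check is routine, and the conclusion is that the only uniquely $C_7$-saturated graphs are the trivial complete graphs $K_1,\dots,K_6$.
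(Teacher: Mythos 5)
Your high-level plan coincides exactly with the paper's proof: reduce to the $2$-connected case via Corollary~\ref{2conn}, forbid $C_{12}$ and $C_{10}$ by Lemma~\ref{lem:C2t-2C2t-4}, extract a $C_4$, $C_6$, or $C_8$ by Lemma~\ref{lem:noevencyc}, and in each case constrain chords and the attachments of $R=V(G)-S$ using Lemma~\ref{cycpath}, Lemma~\ref{twins}, and $2$-connectivity. But as written this is an outline, not a proof: none of the three cases is actually executed, and the execution is the entire content of the theorem. Two concrete points where your sketch would not close up as stated. First, your proposed endgame --- ``force $|V(G)|<14$ in every branch, then a routine finite check'' --- is not achievable and not what the argument delivers; checking all graphs on up to $13$ vertices by hand is not routine, and in fact each case must end in an explicit contradiction. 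For example, in the $C_8$ case, after one shows $R$ is independent, every $R$-vertex has exactly two \emph{consecutive} neighbors on $C$, and $|R|\le1$ (two such vertices give twins or a $C_{10}$), one is left with $V(G)=V(C)$ plus only diametric chords; the contradiction is that three diametric chords create two $7$-cycles, so there is at most one chord, and then two vertices not adjacent to its endpoints admit no $7$-path at all. Nothing in your sketch produces this last step.

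Second, you treat the three cases as independent, but the argument needs $C$ to be chosen as the \emph{longest} even cycle of length at most $8$, so that in the $C_6$ case one may additionally assume $G$ has no $C_8$, and in the $C_4$ case no $C_6$ or $C_8$. This is load-bearing: in the $C_4$ case the unique $7$-path between the two nonadjacent vertices $u,v$ of the $4$-cycle must pass through the other two vertices $x,y$ (else a $C_8$ appears, excluded only because Case~1 was settled first), and then distributing the three internal $R$-vertices among the three subpaths between consecutive cycle vertices always creates a $C_6$, excluded only because Case~2 was settled first. Your ``twins or too small'' heuristic does not capture either of these terminal contradictions, so the proposal has a genuine gap even though its skeleton is the right one.
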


\begin{proof}
Let $G$ be a nontrivial uniquely $C_7$-saturated graph.
By Corollary~\ref{2conn} we may assume that $G$ is $2$-connected.
By Lemma~\ref{lem:C2t-2C2t-4}, $G$ does not contain $C_{12}$ or $C_{10}$.
By Lemma~\ref{lem:noevencyc}, $G$ contains $C_8$, $C_6$, or $C_4$.
Let $C$ be a longest cycle among the even cycles in $G$ with length at most $8$,
and let $R=V(G)-V(C)$.  In each of several cases, we obtain a contradiction.

{\bf Case 1:} {\it $C$ has length $8$.}
If $C$ has a chord joining vertices separated by distance $2$ or $3$ along $C$,
then $G$ contains $C_7$ or $H_{2,4}$ and is not uniquely $C_7$-saturated.
Hence any chord of $C$ joins opposite vertices on $C$.

By Lemma~\ref{cycpath}, $H_{4,2}\not\esub G$, so $R$ is an independent set.
Because $G$ is $2$-connected, each vertex in $R$ has at least two neighbors in
$V(C)$.  Consider $x\in R$.  If $x$ has neighbors on $C$ that are not
consecutive, then $G$ contains $H_{2,4}$, $C_7$, or $H_{3,3}$ and is not
uniquely $C_7$-saturated.  Hence every vertex of $R$ has exactly two neighbors
on $C$, and they are consecutive on $C$.

Since twins are forbidden, two vertices of $R$ cannot be adjacent to the same
consecutive pair.  However, two vertices of $R$ adjacent to distinct 
consecutive pairs yield $C_{10}$ in $G$, which is forbidden. 
We conclude $|R|\le 1$.  If $|R|=1$ and $C$ has a (diametric) chord, then
$H_{3,3}\esub G$, which is forbidden.  If $|R|=1$ and $C$ has no chord,
then adding any diametric chord completes no $7$-cycle.

Hence we may assume that $V(G)=V(C)$ and $C$ has only diametric chords.  Three
diametric chords of an $8$-cycle yield two $7$-cycles (each omits one of the
vertices not incident to a chord).  Hence $G$ has at most one chord $e$.
However, now no $7$-path connects two vertices not adjacent to either endpoint
of $e$.

{\bf Case 2:} {\it $C$ has length $6$.}
By Lemma~\ref{cycpath}, $H_{3,3}$ is not a subgraph of $G$.  Because $G$ is
$2$-connected, it follows that $G[R]$ has no component with at least three
vertices.  If $R$ is not independent, then there is a chordal path $P$ of
length $3$ connecting two vertices on $C$.  If those vertices are consecutive
or separated by distance $2$ along $C$, then $G$ contains $C_8$ or $C_7$, which
is forbidden.  If $P$ joins opposite vertices on $C$, then two $7$-paths join
the neighbors on $C$ of one of the endpoints of $P$.

Hence $R$ is independent.  Since $G$ is $2$-connected, each vertex of $R$ has
at least two neighbors in $V(C)$.  Consecutive neighbors on $C$ yield $C_7$.
Neighbors at distance $2$ along $C$ yield two $7$-paths with the same endpoints.
Hence every vertex of $R$ is adjacent precisely to two opposite vertices on
$C$.  Now any two vertices of $R$ are twins or yield $C_8$, both forbidden.

If $R=\nul$, then $G=K_6$, so we may let $R=\{x\}$.  The neighbors of $x$ are
opposite vertices $y$ and $z$ on $C$.  If $C$ has any non-diametric chord, then
two $7$-paths connect $x$ to some vertex on $C$.  A diametric chord other than
$yz$ creates $C_7$.  Hence the only possible chord is $yz$.  Now $\{y,z\}$ is a
separating set in $G$ such that $G-\{y,z\}$ has three components, and the
addition of a chord of $C$ incident to $y$ or $z$ cannot complete a spanning
cycle in $G$.

{\bf Case 3:} {\it $C$ has length $4$.}
Since $G$ is $2$-connected, there is a chordal path joining two vertices
of $C$.  If $V(C)$ is a clique, then a chordal path of length $3$, $4$, or at
least $5$ creates copies of $C_6$, $C_7$, or $H_{2,4}$, respectively, all
forbidden.  Hence every chordal path has length $2$.  Since $|V(G)|\ge 7$, we
conclude that $G$ contains $C_6$ or twins, both forbidden. 
We may therefore assume that $C$ is a $4$-cycle whose chords are not
both present.  Let $u$ and $v$ be nonconsecutive on $C$ such that
$uv\notin E(G)$, and let $x$ and $y$ be the other vertices of $C$.

The $7$-path $P$ with endpoints $u$ and $v$ also visits $x$ and $y$, since
otherwise $C_8\esub G$, which was forbidden in Case 1.  Let $V(C)$ occur in the
order $u,x,y,v$ along $C$.  The path $P$ uses exactly three vertices of $R$.
No matter how the three extra vertices are allocated to the three subpaths
connecting vertices of $C$, a $6$-cycle is created in $G$, excluded by Case 2.
\end{proof}

We have also proved there are no uniquely $C_8$-saturated graphs.  The proof
uses the approach above, but more cases and details are needed to exclude the
shorter even cycles.  Hence we omit the proof.

\section{Finitely Many Uniquely $C_t$-saturated graphs}\label{Sec:finite}

In this section, we present the proof of Theorem~\ref{thm:finite} that for
any $t\ge 6$ there are only finitely many uniquely $C_t$-saturated graphs.
The main idea is to reduce the problem to the $2$-connected case, showing
that if there are finitely many uniquely $C_t$-saturated graphs that are
$2$-connected, then there are finitely many uniquely $C_t$-saturated graphs.
For the first step, we restrict the ways that $2$-connected uniquely
$C_t$-saturated graphs can be combined.

\begin{lemma}\label{notwo}
If $t\ge6$ and $G$ is a $2$-connected uniquely $C_t$-saturated graph, then no
uniquely $C_t$-saturated graph $F$ has blocks $G'$ and $G''$ isomorphic to $G$
such that $G'$ and $G''$ share a cut-vertex of $F$ that can be viewed as the
same vertex of $G$ in $G'$ and $G''$.
\end{lemma}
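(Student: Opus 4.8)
Suppose, for contradiction, that such an $F$ exists, with the shared cut-vertex $v$ viewed as a common vertex $v_0$ of $G$ under isomorphisms $\phi'\colon G'\to G$ and $\phi''\colon G''\to G$ satisfying $\phi'(v)=\phi''(v)=v_0$. I would first reduce to the case $|V(G)|\ge t$: if $G$ has fewer than $t$ vertices it is complete by Lemma~\ref{smallblock}, and then $G'$ and $G''$ are complete blocks of $F$ sharing a vertex; if $F$ has at least $t$ vertices this contradicts Lemma~\ref{twoblock}, while if $F$ has fewer than $t$ vertices then $F$ is a non-complete graph on fewer than $t$ vertices, no added edge can create a $C_t$, and so $F$ is not uniquely $C_t$-saturated. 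Thus $|V(G)|\ge t$, so $G$ is not complete (else $G\supseteq K_t\supseteq C_t$), $G$ has a $t$-path between every nonadjacent pair, and $V(G)\setminus\{v_0\}\ne\emptyset$.

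The core of the argument is a ``swap'' exploiting the two isomorphic copies of $G$. Fix any $u_0\in V(G)\setminus\{v_0\}$ and set $u'=(\phi')^{-1}(u_0)$ and $u''=(\phi'')^{-1}(u_0)$. Since distinct blocks meet in at most one vertex, $V(G')\cap V(G'')=\{v\}$, so $u'\ne u''$ and $u',u''\ne v$; moreover $u'u''\notin E(F)$, since an edge joining $u'$ and $u''$ together with a $u'$--$v$ path in $G'$ and a $v$--$u''$ path in $G''$ would produce a cycle through an edge of $G'$ and an edge of $G''$, forcing $G'$ and $G''$ into a single block. Hence $F$ has a unique $t$-path $Q$ from $u'$ to $u''$. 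Since $v$ is a cut-vertex of $F$ with $G'-v$ and $G''-v$ in distinct components of $F-v$, the path $Q$ passes through $v$; and since a path cannot revisit the cut-vertices it traverses, the segment of $Q$ from $u'$ to $v$ lies entirely in $G'$ and the segment from $v$ to $u''$ lies entirely in $G''$. Write $Q=Q'\cup Q''$ accordingly, with $Q'$ of length $t_1$, $Q''$ of length $t_2$, and $t_1+t_2=t-1$.

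Now $A:=\phi'(Q')$ is a $u_0$--$v_0$ path of length $t_1$ in $G$, and $B:=\phi''(Q'')$ is a $u_0$--$v_0$ path of length $t_2$ in $G$. But then $(\phi')^{-1}(B)\cup(\phi'')^{-1}(A)$ is a $u'$--$u''$ path in $F$ of length $t_2+t_1=t-1$ (its two pieces lie in $G'$ and $G''$ and meet only at $v$), hence a $t$-path; by uniqueness it equals $Q$, and intersecting with $G'$ gives $(\phi')^{-1}(B)=Q'=(\phi')^{-1}(A)$, so $A=B$ and $t_1=t_2$. Since $t_1+t_2=t-1$, this is impossible when $t$ is even, which already proves the lemma for even $t$. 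When $t$ is odd, write $d=(t-1)/2\ge3$; the same reasoning gives $t_1=t_2=d$, and if $G$ had a second $u_0$--$v_0$ path $A^\ast\ne A$ of length $d$ then $(\phi')^{-1}(A^\ast)\cup(\phi'')^{-1}(A)$ would be a second $t$-path from $u'$ to $u''$, a contradiction. As $u_0$ was arbitrary, $G$ has the following rigidity property: every vertex other than $v_0$ is joined to $v_0$ by a \emph{unique} path of length exactly $d$.

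It remains to rule out this rigidity property for $2$-connected, $C_t$-free graphs on at least $t$ vertices, and this is where I expect the real work to lie. The property forces $v_0$ to have eccentricity at most $d$, so by $2$-connectivity $v_0$ lies on a cycle of length at most $2d$; let $Z$ be a shortest such cycle. If $|Z|=2d$, the vertex of $Z$ antipodal to $v_0$ has two length-$d$ paths to $v_0$, a contradiction, so $|Z|\le 2d-1$. I would then track how the unique length-$d$ paths to $v_0$ from the vertices of $Z$ and their off-$Z$ neighbors are forced to leave $Z$ and recombine, aiming to produce from two such paths either a second length-$d$ path to $v_0$ from a common vertex, or a copy of $C_t$, or an even cycle of a length excluded by Lemma~\ref{lem:C2t-2C2t-4} --- each of which is a contradiction. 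Pinning down these detours precisely, and organizing the resulting cases, is the main obstacle; once it is handled, the lemma follows for all $t\ge6$.
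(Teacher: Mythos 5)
Your opening reduction and the ``swap'' argument are correct and essentially reproduce the first paragraph of the paper's proof: exchanging the two halves of the unique $t$-path between $u'$ and $u''$ across the isomorphic blocks forces $t$ odd and forces every vertex $u_0\ne v_0$ of $G$ to reach $v_0$ by a \emph{unique} path of length $(t-1)/2$. But the proof stops there. The entire remaining content of the lemma is to derive a contradiction from this rigidity property, and you explicitly leave that step as a plan (``I would then track\dots'', ``the main obstacle''). Even the first move of that plan is not justified: $2$-connectivity plus eccentricity at most $d$ does not immediately place $v_0$ on a cycle of length at most $2d$ (two internally disjoint $v_0,u$-paths need not both be short), and the subsequent case analysis of ``detours'' around a shortest cycle through $v_0$ is only a hope, not an argument. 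As written, the lemma is not proved.

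For comparison, the paper finishes quite differently, and more cheaply, by splitting on whether $v_0$ dominates $G-v_0$. If it does, the uniqueness of the length-$(t-1)/2$ path from each $y$ to $v_0$, combined with domination, forces each $y$ to start exactly one path of $(t-1)/2$ vertices in $G-v_0$, whence $y$ has degree $1$ in $G-v_0$; so $G-v_0$ is $1$-regular and $(t-1)/2=2$, i.e.\ $t=5$, contradicting $t\ge6$. If $v_0$ has a nonneighbor $y$, the key idea is to use the \emph{two copies at once}: $G$ is not complete, so by Lemma~\ref{lem:noevencyc} the copy $G'$ contains an even cycle $C$ of length $2k\le 2t-6$; take a shortest path $Q$ from $v$ to $C$ inside $G'$, and the unique $t$-path $P$ from $v$ to $y''$ inside $G''$. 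Since $G'$ and $G''$ meet only at $v$, the path $Q\cup P$ is a pendant path of length at least $t-1\ge t-k-1$ attached to $C$, so $F$ contains $H_{k,t-k-1}$, contradicting Lemma~\ref{cycpath}. Note that this argument never needs the detailed local structure of $G$ near $v_0$ that your sketch tries to control; the second block supplies a long pendant path for free. You would need to supply an argument of comparable force to close the gap.
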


\begin{proof}
Let $x$ be the vertex of $G$ in both $G'$ and $G''$, no edge of $F$ joins
$V(G'-x)$ and $V(G''-x)$.  For $y\in V(G-x)$, let $y'$ and $y''$ be the
corresponding vertices in $G'$ and $G''$.  If $G$ has distinct paths from $y$
to $x$ with lengths summing to $t-1$, then $F$ has two $t$-paths with endpoints
$y'$ and $y''$.  Hence the unique $t$-path with endpoints $y'$ and $y''$ 
consists of copies in $G'$ and $G''$ of a unique $(t+1)/2$-path $P_y$ in $G$
with endpoints $y$ and $x$.  This uniqueness implies that $G$ is not complete
(also $t$ is odd).  We consider two cases.

{\bf Case 1:} {\it $x$ is adjacent to all of $V(G-x)$.}
Let $\th=(t-1)/2$, so each $P_y$ is a $(\th+1)$-path.  Since $P_y$ is unique
and $x$ dominates $V(G-x)$, each $y\in V(G-x)$ starts exactly one $\th$-path in
$G-x$; it is $P_y-x$.  Let $z$ be the other endpoint of $P_y-x$.  Vertex $y$
cannot have a neighbor in $G-x$ outside $P_y$, since $G$ would then have
distinct paths from $z$ to $x$ with lengths $\th+1$ and $\th-1$ having sum
$t-1$.  Also $y$ cannot have a neighbor on $P_y$ other than its neighbor in
$P_y$, since distinct $\th$-paths in $G-x$ would then start at $z$.  Hence
$d_{G-x}(y)=1$.  With $y$ chosen arbitrarily, $G-x$ is $1$-regular.  Hence
$2=\th=(t-1)/2$, so this case requires $t=5$.


{\bf Case 2:} {\it $x$ has a nonneighbor $y$ in $G$.}
Since $G$ is not complete, by Lemma~\ref{lem:noevencyc} $G$ has an even cycle
$C$ of length at most $2t-6$.  Since $G$ is connected, there is a shortest
path $Q$ connecting $x$ to the copy of $C$ in $G'$.  Since $x$ has a nonneighbor
$y$ in $G$, there is a unique $t$-path $P$ in $G''$ with endpoints $x$ and
$y''$.  Letting $2k$ be the length of $C$, the subgraph $C\cup Q\cup P$ of $F$
contains $H_{k,t-k-1}$, which is forbidden by Lemma~\ref{cycpath}.
\end{proof}

\begin{lemma}\label{finite1}
If there are finitely many $2$-connected uniquely $C_t$-saturated graphs,
then there are finitely many uniquely $C_t$-saturated graphs.
\end{lemma}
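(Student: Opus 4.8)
The plan is to bound the structure of an arbitrary uniquely $C_t$-saturated graph $G$ in terms of its blocks, using the block–cut tree, and to show that only finitely many block types can occur and that they can be assembled in only finitely many ways. First, I would invoke Lemma~\ref{smallblock}: every block of $G$ is itself uniquely $C_t$-saturated, and every block with fewer than $t$ vertices is a complete graph. By Lemma~\ref{completeblock}, every complete block has at most three vertices, so the only ``small'' blocks are $K_1$, $K_2$, and $K_3$. The remaining blocks have at least $t$ vertices and are uniquely $C_t$-saturated; if additionally they are $2$-connected, then by hypothesis there are only finitely many of them, hence a uniform bound $N$ on the number of vertices in any $2$-connected block. (A block with at least $t$ vertices that is not $2$-connected would have a cut-vertex, contradicting the definition of block; so every block with at least $t$ vertices is $2$-connected, and the hypothesis applies directly.)

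Next I would bound the block–cut tree itself. By Lemma~\ref{twoblock}, for $t\ge6$ no two blocks sharing a cut-vertex are both complete; in particular no two of the trivial blocks $K_2,K_3$ can meet, so around any cut-vertex at most one incident block is complete. This already forces the ``tree of blocks'' to be built mostly from large ($2$-connected) blocks, with at most a bounded local attachment of a single $K_2$ or $K_3$. The key remaining ingredient is Lemma~\ref{notwo}: if $G'$ is a fixed $2$-connected uniquely $C_t$-saturated graph, then a uniquely $C_t$-saturated graph cannot contain two blocks isomorphic to $G'$ glued at a common cut-vertex in the ``same'' way. I expect one more symmetric argument (or an extension of the proof of Lemma~\ref{notwo}) is needed to say that, more generally, a cut-vertex cannot be shared by two large blocks at all, or at least that the multiset of large blocks meeting at any cut-vertex is severely restricted; this is where I anticipate the main obstacle, since Lemma~\ref{notwo} as stated only handles the isomorphic case with a matched cut-vertex. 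Granting such a statement, every cut-vertex of $G$ lies in at most a bounded number of large blocks (indeed the proof of Lemma~\ref{notwo} should push this to the point that the block–cut tree has bounded degree and, crucially, bounded \emph{depth}).

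Finally, with a bound on the number of vertices per block, a bound on the degree of the block–cut tree, and a bound on its depth, the total number of vertices of $G$ is bounded by a function of $t$ alone, so there are only finitely many uniquely $C_t$-saturated graphs. The steps in order: (1) classify blocks into the three trivial types and the finitely many $2$-connected types, obtaining a size bound per block; (2) use Lemma~\ref{twoblock} to forbid adjacent trivial blocks; (3) use Lemma~\ref{notwo} and a companion argument to forbid (or strictly limit) adjacent large blocks, thereby bounding the degree and depth of the block–cut tree; (4) multiply the bounds to conclude $|V(G)|$ is bounded. The hard part is step (3): turning the single ``no two isomorphic blocks at a matched cut-vertex'' statement into a bound that controls the entire block–cut tree, which likely requires chasing $t$-paths through a cut-vertex exactly as in the proof of Lemma~\ref{notwo}, combining an even cycle from Lemma~\ref{lem:noevencyc} in one block with a $t$-path reaching into a neighboring block to produce a forbidden $H_{k,t-k-1}$ via Lemma~\ref{cycpath}.
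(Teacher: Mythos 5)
Your skeleton is right --- bounded block size from the hypothesis plus Lemmas~\ref{smallblock} and~\ref{completeblock}, bounded diameter of the block-cutpoint tree from the diameter bound on $C_t$-saturated graphs, and Lemma~\ref{notwo} as the tool for controlling how blocks meet at cut-vertices --- but your step (3) is a genuine gap, and you explicitly leave it unresolved (``Granting such a statement\ldots''). You anticipate needing a \emph{companion structural argument} (more $t$-path chasing, even cycles, forbidden $H_{k,t-k-1}$'s) to upgrade Lemma~\ref{notwo} from ``no two isomorphic blocks glued at a matched cut-vertex'' to ``boundedly many large blocks per cut-vertex.'' No such upgrade is needed: the missing idea is pure pigeonhole. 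Once you know there are only finitely many isomorphism types of blocks and each has at most $N$ vertices, any collection of more than (number of types)$\times N$ blocks sharing a cut-vertex $v$ must contain two isomorphic blocks in which $v$ sits at the same vertex of the common model --- exactly the configuration Lemma~\ref{notwo} forbids. The paper runs this argument in contrapositive form: infinitely many uniquely $C_t$-saturated graphs would force graphs with arbitrarily many blocks, hence (by the bounded diameter of the block-cutpoint tree) arbitrarily many leaf blocks at a single cut-vertex, hence two isomorphic leaf blocks with a matched cut-vertex, a contradiction. Either phrasing works, but the counting step is what closes your gap, not a new path-chasing lemma.

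Two smaller corrections. First, the bounded ``depth'' of the block-cutpoint tree does not come from Lemma~\ref{notwo} or its proof, as you suggest; it comes from the fact that a $C_t$-saturated graph has diameter at most $t-1$ (nonadjacent vertices are endpoints of a $t$-path), so no path in $G$ meets more than $t-1$ blocks. Second, your appeal to Lemma~\ref{twoblock} to forbid adjacent complete blocks is harmless but unnecessary: the pigeonhole argument treats $K_2$ and $K_3$ blocks on the same footing as the large ones, since they too are among the finitely many bounded-size block types.
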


\begin{proof}
The diameter of a $C_t$-saturated graph is at most $t-1$.
Hence the diameter of the block-cutpoint trees of $C_t$-saturated graphs
is also bounded; that is, a $C_t$-saturated graph cannot contain a path that
contains edges from more than $t-1$ blocks.

Every block of a uniquely $C_t$-saturated graph is uniquely $C_t$-saturated.
With finitely many $2$-connected uniquely $C_t$-saturated graphs, the number of
vertices in any single block of a uniquely $C_t$-saturated graph is bounded.
If there are infinitely many uniquely $C_t$-saturated graphs, then they must
exist with arbitrarily many blocks.  With bounded diameter in the
block-cutpoint tree, they must have block-cutpoint trees with arbitrarily many
leaves.

Since the distance between leaves is bounded, there must be arbitrarily many
leaf blocks having a common cutvertex.  Since the number of possible leaf
blocks is bounded, there must exist instances with arbitrarily many isomorphic
leaf blocks having a common cut-vertex.  Since the number of vertices in the
blocks are bounded, there must be instance with two isomorphic leaf blocks
sharing a cut-vertex that has the same identity in each of the two blocks.
This contradicts Lemma~\ref{notwo}.
\end{proof}

To complete the proof of the theorem, we need to show that there are
finitely many $2$-connected uniquely $C_t$-saturated graphs.  We do this
by bounding the number of vertices in such a graph.  Two lemmas are needed.

The first extends Lemma~\ref{twins}.  When $G$ has twins, it has an
automorphism exchanging the twins but leaving all other vertices fixed.
The twins are components of $G-S$, where $S$ is their common neighborhood.
We next consider a situation in which $G-S$ contains four isomorphic components.
Given a set $S\esub V(G)$ and $v\in V(G)-S$, let a {\it $v,S$-path} be a path
connecting $v$ to a vertex in $S$ with no internal vertices in $S\cup\{v\}$.

\begin{lemma}\label{lem:isomorphcompnts}
Given $t\ge8$, let $G$ be a $2$-connected graph with $S\subset V(G)$.  Suppose
that $G-S$ has distinct isomorphic components $F_1$, $F_2$, $F_3$ and $F_4$
such that for all $i\in \{2,3,4\}$ there is an automorphism $\varphi_i$ of $G$
such that (1) $\varphi_i^2$ is the identity, (2) $\varphi_i(F_1)= F_{i}$, and 
(3) $\varphi_i$ fixes all vertices outside $F_1\cup F_i$.  If $G$ is
uniquely $C_t$-saturated, then every vertex of $F_1$ that has a neighbor in $S$
starts some path in $F_1$ with length $t-2$.
\end{lemma}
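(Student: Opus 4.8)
The plan is to show that the hypotheses force $t$ to be odd, and then to build the required path inside $F_1$ by repeatedly ``pulling back'' unique $t$-paths between corresponding copies of vertices in the four components. Fix a vertex $v\in F_1$ with a neighbor $s\in S$, set $v_1=v$, and set $v_i=\varphi_i(v)\in F_i$ for $i\in\{2,3,4\}$; since each $\varphi_i$ fixes $S$ pointwise, every $v_i$ is adjacent to $s$. The vertices $v_1$ and $v_i$ lie in distinct components of $G-S$, hence are nonadjacent, so there is a unique $t$-path $P_{1i}$ with endpoints $v_1$ and $v_i$. As $\varphi_i$ swaps $v_1$ and $v_i$ and fixes everything outside $F_1\cup F_i$, it carries $P_{1i}$ to a $t$-path with the same endpoints; by uniqueness $\varphi_i$ fixes $P_{1i}$ setwise and so acts on it as the reversal. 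Now $S$ separates $F_1$ from $F_i$, so $P_{1i}$ meets $S$, and every vertex of $P_{1i}$ in $S$ is fixed by $\varphi_i$; but the reversal of a $t$-path fixes a vertex only when $t$ is odd, and then only the central one (which must lie in $S$, since otherwise $P_{1i}$ would cross $S$ twice). Hence $t$ is odd, $P_{1i}$ meets $S$ exactly in its center $h_i$, and the first $(t-1)/2$ vertices of $P_{1i}$ form a path in $F_1$ that starts at $v$ and ends at a neighbor of $h_i$. Thus $v$ already starts a path in $F_1$ on $(t-1)/2$ vertices whose far endpoint again has a neighbor in $S$. (If $t$ is even no configuration as in the hypothesis exists, and the statement is vacuous.)

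To lengthen this path I would iterate. For distinct $a,b\in\{2,3,4\}$ the conjugate $\sigma_{ab}=\varphi_a\varphi_b\varphi_a$ is an involution of $G$ that swaps $F_a$ and $F_b$ and fixes $F_1$, the remaining component, and $S$ pointwise; so for every pair among the four components there is an involution swapping that pair and fixing the other two components and $S$ pointwise. A short computation with these involutions shows that all six unique $t$-paths joining two of $v_1,\dots,v_4$ pass through one common hub $h\in S$ and in each component $F_a$ traverse $\varphi_a(A)$, where $A=\langle v=a_1,\dots,a_m\rangle$ with $m=(t-1)/2$ and $a_m$ adjacent to $h$. Since $a:=a_m$ has a neighbor in $S$, the same argument---now using $\sigma_{34}$---applies to the pair $\varphi_3(a),\varphi_4(a)$: their unique $t$-path is $\sigma_{34}$-invariant, hence symmetric about a central vertex $h'\in S$, and pulling its $F_3$-part back by $\varphi_3$ gives a path $A^*$ on $m$ vertices in $F_1$ that starts at $a$ and ends at a neighbor of $h'$. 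Running the construction once more at that endpoint yields a third path $A^{**}$ on $m$ vertices in $F_1$. If $A$, $A^*$, $A^{**}$ overlap only at their shared endpoints, their concatenation is a path in $F_1$ starting at $v$ on $3m-2=(3t-7)/2$ vertices, which is at least $t-1$ for $t\ge5$; its initial segment on $t-1$ vertices is a path of length $t-2$, as required. (One extension alone gives only $2m-1=t-2$ vertices, one short of the bound, so the second extension is what finishes the count.)

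The main obstacle is to ensure that these concatenations really are paths---that $A^*$ does not re-enter $A$ and $A^{**}$ does not re-enter $A\cup A^*$---and that at each stage some available choice of pull-back genuinely advances the path rather than retracing it. Suppose, for example, that the prefix of $A^*$ first returns to $V(A)$ at a vertex $a_i$. Then the subpath of $A$ between $a_i$ and $a_m$ together with the prefix of $A^*$ from $a_m$ to $a_i$ bounds a cycle $C_0$ of length at most $t-3$ contained in $F_1$, the path $\langle a_1,\dots,a_i\rangle$ is pendant to $C_0$ at $a_i$, and $a_1=v$ has the neighbor $s\in S$. From this one should derive a contradiction in each sub-case: extending the pendant path (along the edge $vs$ and, when possible, farther into $G$) produces a copy of some $H_{k,\ell}$ forbidden by Lemma~\ref{cycpath}; or the two internally disjoint paths between $a_i$ and $a_m$ inside $C_0$, extended identically through the hub $h$ and the copies $\varphi_a(A)$, give two distinct $t$-paths with common endpoints; or a parity count forces a short even cycle excluded by Lemmas~\ref{lem:C2t-2C2t-4} and~\ref{lem:noevencyc}. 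Pushing this case analysis through---which is where the hypothesis $t\ge8$ and the presence of four mutually symmetric components are genuinely needed---is the bulk of the proof; everything else is bookkeeping with the automorphisms $\varphi_i$ and their conjugates $\sigma_{ab}$.
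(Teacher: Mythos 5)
Your opening step is correct and is in fact a slightly different (and clean) derivation of the same preliminary fact the paper proves: the unique $t$-path between $v$ and $\varphi_i(v)$ is carried to itself by $\varphi_i$, hence reversed by it, hence has a unique fixed vertex, which must be its only vertex in $S$; so $t$ is odd and $v$ starts a path on $(t-1)/2$ vertices in $F_1$ whose far endpoint has a neighbor in $S$. (The paper instead derives a contradiction from the absence of such a path by swapping the two $S$-escape segments of $P$ under $\varphi_2$; both arguments work, and only two of the four components are needed here.)

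The gap is everything after that. Your route to length $t-2$ is to iterate the half-path construction twice and concatenate $A$, $A^*$, $A^{**}$, but you have not shown (i) that the new half-path at $a_m$ is not simply $A$ traversed backwards --- and it genuinely can be, since the reverse of $A$ is itself a path on $(t-1)/2$ vertices from $a_m$ ending at $v$, which has the neighbor $s\in S$, so the unique $t$-path between $\varphi_3(a_m)$ and $\varphi_4(a_m)$ may perfectly well be the image of that reversed path glued through $s$, in which case the iteration makes no progress --- nor (ii) that when the iteration does advance, $A^*$ and $A^{**}$ avoid the previously used vertices so that the concatenation is actually a path. You explicitly defer both points to an unexecuted case analysis and call it ``the bulk of the proof,'' which it is; as written the lemma is not established. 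The paper avoids the iteration entirely: assuming the vertex $x$ starts no path of length $t-2$ in $F_1$, it uses $2$-connectivity to find a second escape $Q'$ from the $((t-1)/2)$-vertex path $P'$ to $S$ avoiding the neighbor $y$, forms the even cycle $\varphi_2(Q)\cup\varphi_3(Q)$ of length $2k$, and then either exhibits a forbidden $H_{k,t-k-1}$ (when $k\ge(t-1)/2$) or two distinct $t$-paths from $z$ to a vertex of $F_4$ (when $k<(t-1)/2$). That single dichotomy is the idea your proposal is missing, and it is where the fourth component and the $2$-connectivity hypothesis actually get used.
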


\begin{proof}
Let $\th=(t-1)/2$.  We first prove that for every $x_1\in V(F_1)$ there is an
$x_1,S$-path of length $\th$.  If this fails for some $x_1\in V(F_1)$, then
let $x_2=\varphi_2(x_1)$.  Since $G$ is uniquely $C_t$-saturated, it contains a
unique $t$-path $P$ with endpoints $x_1$ and $x_2$.  For $i\in\{1,2\}$, let
$P^i$ be the $x_i,S$-path contained in $P$.  If $P^1$ and $P^2$ have the same
endpoint in $S$, then they have different lengths, since $G$ has no
$x_1,S$-path of length $\th$.  Otherwise, they have different endpoints in $S$.
In both cases, a second $t$-path with endpoints $x_1$ and $x_2$ consists of $P$
with each $P^i$ replaced by $\varphi_2(P^{3-i})$.  This contradiction proves
the claim.

Hence for all $x\in F_1$ there is an $x,S$-path of length $\th$.  Let $x$ be a
vertex of $F_1$ having a neighbor $y\in S$.  Suppose that $x$ starts no path in
$F_1$ with length $t-2$.  Since $G$ contains an $x,S$-path $P$ of length $\th$,
we may choose $z\in V(F_1)$ at distance $\th-1$ from $x$ along $P$.  Let $P'$
be the $\th$-path from $x$ to $z$ along $P$.  Because $G$ is $2$-connected,
$G-y$ has a shortest path connecting $V(P')$ and $S$; call it $Q'$, with
endpoints
$x'\in V(P')$ and $y'\in S$.  Let $Q$ be the path from $y$ to $y'$ consisting
of the edge $yx$, the subpath of $P'$ from $x$ to $x'$, and $Q'$ (see Figure~\ref{fig:F1}).  Let $k$ be
the length of $Q$.  Since $x$ starts no path of length $t-2$ in $F_1$,
we have $k\le t-1$.

\begin{figure}[htb]
 \begin{center}
 \begin{tikzpicture}
 
 \path
(0,0) node [line width=2pt,shape=ellipse,minimum width=6cm, minimum height=2cm,draw]{};
\fill (-4,2.5) circle (4pt);
\node at (-4.5,2.5) {$x$};
\fill (4,2.5) circle (4pt);
\node at (4.5,2.5) {$z$};
\draw[line width=1.5pt] (-4,2.5)--(4,2.5);
\fill (-2.5,.2) circle (4pt);
\node at (-2.2,0) {$y$};
\fill (-.5,.5) circle (4pt);
\node at (-.5,.1) {$y'$};
\node at (0,-.5) {$S$};
\draw[line width=2pt] (-4,2.5)--(-2.5,.2);
\draw[line width=2pt] (0,2.5)--(-.5,.5);
\fill (0,2.5) circle (4pt);
\node at (0,2.85) {$x'$};
\fill (3,2.5) circle (4pt);
\node at (3,2.1) {$w$};
\draw[line width=3pt] (-2.5,.2)--(-4,2.5)--(0,2.5)--(-.5,.5);
\node at (-2,1.5) {$Q$};
\draw[line width=1pt, ->] (-2.3,1.55)--(-3,1.55);
\draw[line width=1pt, ->] (-1.7,1.55)--(-.6,1.55);
\draw[line width=1pt, ->] (-2,1.8)--(-2,2.35);
 \draw[decorate,decoration={brace,amplitude=10pt,raise=1pt},yshift=0pt] (.2,2.4) -- (-.3,.6) node [midway,yshift=-5pt,xshift=18pt]{$Q'$};
  \draw[decorate,decoration={brace,amplitude=10pt,raise=1pt},yshift=0pt] (-4,2.9) -- (4,2.9) node [midway,yshift=17pt]{$P'$};
\end{tikzpicture}
 \end{center}
\vspace{-1pc}
\caption{Paths in $F_1$.
The bold path is $Q$, which overlaps with a portion of $P'$.\label{fig:F1}}
\end{figure}
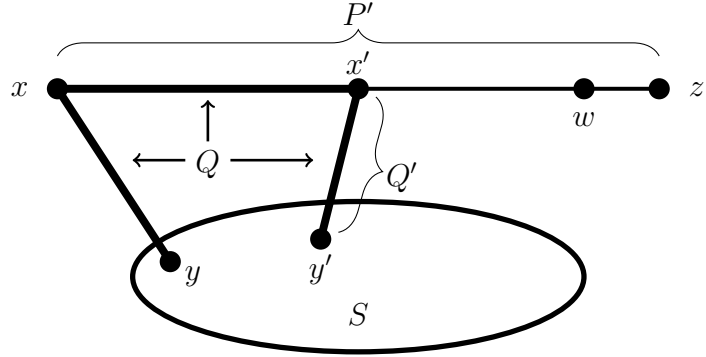

Note that $\varphi_2(Q)\cup\varphi_3(Q)$ is a cycle $C$ of length $2k$ in $G$,
and the union of $\langle y,x\rangle$ with $P'$ is a path of length $\th$.
If $k\ge \th$, then $C\cup\langle y,x\rangle\cup P'$ contains $H_{k,t-k-1}$.

Hence we may assume $k<\th$.  The portion of $Q$ along $P'$ has length at most
$k-2$.  Hence the path from $y'$ to $z$ in $Q'\cup P'$ has length at least
$\th-k+2$.  Let $\hat P$ be its subpath of length $\th-k$ starting from $y'$,
and let $w$ be the other endpoint of $\hat P$.  For $i\in\{2,3\}$, the
concatenation of $P'$, $xy$, $\varphi_i(Q)$, and $\varphi_4(\hat P)$ has
$\th+1+k+\th-k$ vertices; hence it is a $t$-path with endpoints $z$ and
$\varphi_4(w)$.  Again this contradicts $G$ being uniquely $C_t$-saturated, so
$x$ must start a path in $F_1$ with length $t-2$.
\end{proof}

\begin{lemma}\label{fin2conn}
There are finitely many $2$-connected uniquely $C_t$-saturated graphs.
\end{lemma}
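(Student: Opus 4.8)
The plan is to bound the number of vertices in a $2$-connected uniquely $C_t$-saturated graph $G$ using the structural restrictions already assembled, so that finiteness follows. The two key tools are Lemma~\ref{lem:noevencyc}, which forces a short even cycle, and Lemma~\ref{lem:isomorphcompnts}, which is a four-fold twin-type reduction: if $G-S$ has four isomorphic components related by commuting involutory automorphisms, then every vertex of one such component with a neighbor in $S$ must start a $(t-2)$-path inside its own (small) component. The idea is to choose $S$ so cleverly that such configurations of four isomorphic components are forced once $|V(G)|$ is large, and then derive a contradiction because a component that must contain a $(t-2)$-path cannot itself be small.

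First I would fix an even cycle $C$ in $G$ of length at most $2t-6$, guaranteed by Lemma~\ref{lem:noevencyc}, and set $S=V(C)$, so $|S|\le 2t-6$ is bounded. Every vertex of $R=V(G)-S$ lies in some component of $G-S$, and since $G$ is $2$-connected each component $F$ of $G-S$ attaches to $S$ through at least two vertices. Because $G$ is $C_t$-saturated, its diameter is at most $t-1$, so every component of $G-S$ has bounded diameter; the real content is to bound the order of each component. I would argue that if some component $F$ of $G-S$ has many vertices, then it contains a long path, and combining a long path inside $F$ with two disjoint $F,S$-paths and a path along $C$ produces either a forbidden $H_{k,\ell}$ (Lemma~\ref{cycpath}), a forbidden long even cycle $C_{2t-2}$ or $C_{2t-4}$ (Lemma~\ref{lem:C2t-2C2t-4}), or two distinct $t$-paths between a fixed pair of nonadjacent vertices. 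This bounds the order of each component of $G-S$, hence there are only finitely many isomorphism types of such components, and each has bounded size.

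Now the number of components of $G-S$ is the only remaining unbounded quantity. Here I would invoke the pigeonhole principle: among the (boundedly many) attachment patterns to $S$ and the (boundedly many) isomorphism types, if there are very many components then at least four of them, say $F_1,F_2,F_3,F_4$, are isomorphic and attach to $S$ in exactly the same way, so that the map exchanging $F_1$ with $F_i$ and fixing everything else is an automorphism of $G$ — establishing the hypotheses of Lemma~\ref{lem:isomorphcompnts} (for $t\ge 8$; the cases $t\in\{6,7\}$ are already fully handled by Theorems~\ref{thm:C6} and~\ref{thm:C7}). That lemma then forces every vertex of $F_1$ with a neighbor in $S$ to start a path of length $t-2$ inside $F_1$. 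But $F_1$ has bounded order, so for $t$ large relative to that bound this is impossible, giving the contradiction and hence a bound on the number of components; the subtle point is that the bound on $|V(F_1)|$ must be made uniform and shown to be smaller than $t-1$, so one may need a sharper estimate or a small separate argument ruling out the remaining short cases.

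The main obstacle I anticipate is getting the component-order bound to interact correctly with Lemma~\ref{lem:isomorphcompnts}: the lemma's conclusion (``starts a $(t-2)$-path'') is only useful if $F_1$ is genuinely too small to contain such a path, so the bound on $|V(F)|$ must not secretly depend on $t$ in a way that grows as fast as $t$. A secondary difficulty is handling components that attach to $S$ at only two vertices but are themselves not small — there one cannot immediately extract a long path plus two disjoint $F,S$-paths, and a more careful case analysis (perhaps re-choosing $S$ to include an attachment vertex, or using a different short even cycle through that component) will be needed before the pigeonhole step applies.
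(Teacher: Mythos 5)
Your overall frame (take a short even cycle $C$ from Lemma~\ref{lem:noevencyc}, set $S=V(C)$, bound the components of $G-S$, then pigeonhole four isomorphic components and apply Lemma~\ref{lem:isomorphcompnts}) is the same as the paper's, but two of your key steps do not go through. First, the claim that a component $F$ of $G-S$ with many vertices must contain a long path is false: a component can have bounded diameter and no long path yet be arbitrarily large (a star, for instance), so no forbidden $H_{k,\ell}$ or repeated $t$-path is forced. The quantity that actually has to be bounded is the \emph{maximum degree} of $G$; combined with the diameter bound $t-1$ this bounds $|V(G)|$. The paper does this by a layered induction: it stratifies $V(G)-S$ into sets $R_{t-k-i}$ according to the length of a longest $v,S$-path, bounds degrees for $i=2$ directly from the prohibition of $H_{k,t-k-1}$, and in the inductive step uses Lemma~\ref{twins} to control the neighbors of $v$ whose entire neighborhood lies in the bounded set $S'=S\cup V(P)$, and the four-component pigeonhole (applied to components of $G-S'$, not of $G-S$) to control the remaining neighbors. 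Your proposal contains no mechanism for bounding degrees, so the step ``finitely many isomorphism types of components, each of bounded size'' is unsupported.

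Second, the contradiction you intend to draw from Lemma~\ref{lem:isomorphcompnts} --- that $F_1$ is too small to contain a path of length $t-2$ --- cannot be repaired, as you yourself half-suspect: any bound on component order here unavoidably grows with $t$ (the paper's bound involves terms like $2^{t+k-i-1}$), so size considerations alone never exclude a path of length $t-2$. The contradiction the paper actually uses is structural, not counting-based: the vertex supplied by Lemma~\ref{lem:isomorphcompnts} has a neighbor in $S'$, hence already sits at the end of a path of length $t-k-i+1$ leaving the cycle $C$; extending along the guaranteed path of length $t-2$ inside its component produces a path of length at least $t-k-1$ leaving $C$, i.e., the forbidden subgraph $H_{k,t-k-1}$ of Lemma~\ref{cycpath}. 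Without this observation, and without the degree induction feeding into it, the argument does not close.
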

\begin{proof}
By Theorems~\ref{thm:C6} and~\ref{thm:C7}, there are no uniquely
$C_6$-saturated or $C_7$-saturated graphs.  Hence we may assume $t\ge 8$.
It suffices to prove that the number of vertices in a $2$-connected uniquely
$C_t$-saturated graph $G$ is bounded.  In order to prove this, we prove that
the maximum degree in such a graph is bounded.  Since the diameter of
a uniquely $C_t$-saturated graph is less than $t$, this bounds the number of
vertices.

By Lemma~\ref{lem:noevencyc}, $G$ contains an even cycle $C$ of length at most
$2t-6$.  Let $C$ have length $2k$, and let $S=V(C)$.  By Lemma~\ref{cycpath},
$G$ does not contain $H_{k,t-k-1}$, and hence all paths leaving $S$ have length
at most $t-k-2$.

Let $R_{t-k-i}$ be the set of vertices $v$ outside $S$ such that longest
$v,S$-paths have length $i$.
Because $G$ is connected and $H_{k,t-k-1}\not\subseteq G$, it follows that every vertex of $G-S$ lies in $R_{t-k-i}$ for some $i$ with
$2\le i\le t-k-1$.
Also set $R_0=S$; this is 
$R_{t-k-i}$ for $i=t-k$.  We proceed by induction on $i$ to prove the
existence of $c_i$ such that $d_G(v)\le c_i$ when $v\in R_{t-k-i}$ and $2\le i\le t-k$.

For $v\in R_{t-k-2}$, the neighbors of $v$ lie in $S$ or on a $v,S$-path of
length $t-k-2$, since $H_{k,t-k-1}\notin G$.  Thus $d_G(v)\le t+k-3\le 2t-6$,
and we can set $c_2=2t-6$.

Now consider $v\in R_{t-k-i}$, where $3\le i\le t-k$.  Let $P$ be a $v,S$-path
of length $t-k-i$, and let $S'=S\cup V(P)$.  Let $N'(v)=N(v)-S'$.  By
Lemma~\ref{twins}, $G$ does not contain twins, so at most $2^{t+k-i-1}$
vertices in $N'(v)$ have neighborhoods contained in $S'$.  Let $N''(v)$ be the
set of vertices in $N'(v)$ having a neighbor outside $S'$, so
$|N''(v)|\ge d_G(v)-(t+k-i-1)-2^{t+k-i-1}$.

If any component of $G-S'$ contains an $(i-1)$-path starting at a vertex of
$N''(v)$, then $G$ contains $H_{k,t-k-1}$ and is not uniquely $C_t$-saturated.
Let $F$ be the subgraph of $G-S'$ that consists of the components of $G-S'$ that contain vertices in $N''(v)$.
Hence each vertex of $F$ lies in $R_{t-k-j}$ for some $j$ with $2\le j<i$.
By the induction hypothesis, $F$ has maximum degree bounded by
$\max_{j<i}c_j$.  With also bounded diameter, the number of vertices in a
component of $F$ is bounded by some value $h(t)$.

Let $F'$ be a possible component of $F$.  For each component of $F$
isomorphic to $F'$, we can list the neighborhood in it of each vertex of $S'$;
there are $(2^{|V(F')|})^{|S'|}$ possible such lists.  If $F$ has more than
$3\cdot 2^{|V(F')|\cdot|S'|}$ components isomorphic to $F'$, then some four of
them yield the same list.  Each vertex of $S'$ has the same neighborhood in
these four components, so together with $S'$ they satisfy the conditions of
Lemma~\ref{lem:isomorphcompnts}.  The resulting path with length $t-2$ from a
vertex of $N''(v)$ would contradict $G$ being uniquely $C_t$-saturated.

Since $|S'|=t+k-i<2t$, we have at most $3\cdot 2^{2t\cdot h(t)}$ components of $F$ isomorphic to $F'$, and there are fewer than $2^{\binom{h(t)}2}$ isomorphism
classes of graphs with at most $h(t)$ vertices.  Hence we have a bound (in
terms of $t$) on $|N''(v)|$ and hence also a bound $c_i$ on $d_G(v)$.

Since every vertex of $G$ lies in $R_{t-k-i}$ for some $i$ with
$2\le i\le t-k$, we have established $\max_{2\le i\le t-k} c_i$ as a bound
on the degrees of all vertices in $G$.
\end{proof}

Lemmas~\ref{finite1} and~\ref{fin2conn} complete the proof of
Theorem~\ref{thm:finite}: there are finitely many uniquely $C_t$-saturated
graphs.

\end{document}